\theoremstyle{plain}
  \newtheorem{theorem}{Theorem}
  \newtheorem{proposition}[theorem]{Proposition}
  \newtheorem{lemma}[theorem]{Lemma}
\theoremstyle{definition}
  \newtheorem{definition}[theorem]{Definition}
\theoremstyle{remark}
  \newtheorem{example}{Example}
  \newtheorem{remark}[theorem]{Remark}
\newcommand{\gra}[1]{\left\{ #1 \right\}}
\newcommand{\Prob}{\mathbb{P}}
\newcommand{\E}{\mathbb{E}}
\newcommand{\punto}{\textrm{.}}
\newcommand{\virgola}{\textrm{,}}
\newcommand{\ldue}{L^2\pa{0,T}}
\newcommand{\erre}{\mathbb{R}}
\newcommand{\reals}{\mathbb{R}}
\newcommand{\cD}{\mathbb{D}}
\newcommand{\cB}{\mathcal{B}}
\newcommand{\qua}[1]{\left[ #1 \right]}
\newcommand{\pa}[1]{\left( #1 \right)}
\newcommand{\abs}[1]{\left| #1 \right|}
\newcommand{\norm}[1]{\left\| #1 \right\|}
\newcommand{\ang}[1]{\left< #1 \right>}
\begin{document}

\title{ Functions of Bounded Variation on the Classical Wiener Space and an Extended Ocone-Karatzas Formula }

\author[unipi]{M.~Pratelli}

\ead{pratelli@dm.unipi.it}

\author[sns]{D.~Trevisan\corref{cor1}}

\ead{dario.trevisan@sns.it}

\address[unipi]{Dipartimento di Matematica, Universit\`a di Pisa, Largo Bruno Pontecorvo, 5, 56127, Pisa, Italy}
\address[sns]{Scuola Normale Superiore, Piazza dei Cavalieri, 7, 56126, Pisa, Italy.\\Mobile: +39 331 2899761}

\cortext[cor1]{Corresponding author}

\begin{abstract}
We prove an extension of the Ocone-Karatzas integral representation, valid for all $BV$ functions on the classical Wiener space. We establish also an elementary chain rule formula and combine the two results to compute explicit integral representations for some classes of $BV$ composite random variables.
\end{abstract}

\begin{keyword}
$BV$ functions \sep Wiener space \sep Ocone-Karatzas formula
\end{keyword}

\maketitle
\section{Introduction}

Functions of bounded variation ($BV$) in a Gaussian Banach space setting were first investigated by M.~Fukushima and M.~Hino in \cite{fuku00} and \cite{hino01}, using techniques from the theory of Dirichlet forms. More recently, L.~Ambrosio and his co-workers gave an alternative approach, in \cite{ambrosio1}, by adapting techniques from geometric measure theory.

The most important example of an infinite-dimensional Gaussian space is given by the classical Wiener space $\pa{\Omega,\mathcal{A},\Prob}$, i.e.\ the space of trajectories of the Wiener process. This was the setting where the Malliavin calculus was originally developed and still most of its applications are formulated. Since $BV$ functions generalize Malliavin differentiable functions, we specialize here the general results valid for all Gaussian spaces, work on explicit examples and consider new problems which appear naturally, in connection with stochastic analysis. Thus, all the results given here are formulated in this setting, though some of them are certainly valid in any abstract Wiener space.

One of the aims of this paper is to study how the theory of $BV$ functions can be a useful additional tool, even when dealing with classical problems. We believe that the extension of the Ocone-Karatzas formula, Theorem \ref{theorem-clark-ocone-bv}, can be regarded as the best example, in this direction, among those presented in this paper.

Roughly speaking, a real function $f$ defined on the classical Wiener space is $BV$ if it admits an $L^2\pa{0,T}$-valued measure $Df$ which plays the role of a Malliavin derivative, so that an integration-by-parts identity holds true. Generalizing the situation of differentiable functions, where the Malliavin derivative is a stochastic process, $Df$ can be seen as a measure on the product space $\pa{\Omega\times [0,T], \mathcal{A}\otimes \mathcal{B}\pa{0,T}}$ and processes can be integrated with respect to it.

Notably, $Df$ can be not absolutely continuous with respect to $\Prob\otimes\lambda$ (where $\lambda$ is the Lebesgue measure). However, if we introduce the strictly predictable $\sigma$-algebra $\mathcal{P}$, which is slightly smaller than the usual $\sigma$-algebra of predictable sets in $\Omega\times [0,T]$, then $Df$ restricted to $\mathcal{P}$ becomes absolutely continuous with respect to $\Prob\otimes\lambda$. Moreover, if $H = \pa{H_s}_{0\le s \le T}$ is a version of the density, then
\[ f = \E\qua{f} + \int_0^T H_s dW_s \punto \]
This is, informally, the content of Theorem \ref{theorem-clark-ocone-bv}, that is our extension of the classical Ocone-Karatzas formula, originally proved in \cite{karaoco91}, which identifies the integrand in the It\^o representation of a random variable in terms of its Malliavin derivative. In the differentiable case, one usually writes $H_s = \E\pa{ \partial_s f | \mathcal{F}_s }$, for $\lambda$ a.e.\ $s\in [0,T]$. Proposition \ref{proposition-predictable-projection} shows that an similar result holds true in the $BV$ case, which can be useful when dealing with concrete cases, although from a higher point of view the process $H$ should be considered as the (dual) predictable projection of the measure $Df$.

$BV$ functions can be helpful when dealing with a composite function $f = \phi\circ g$, where $g$ is a real random variable, differentiable in the Malliavin sense, and $\phi$ is a Euclidean $BV$ function. In such a case, as in the classical theory of Euclidean $BV$ functions, many problems for general functions can be reduced to the case of indicator functions of level sets $\gra{x > g}$, which can be shown to be $BV$, under certain assumptions. Moreover, an explicit chain rule formula can be obtained (Theorems \ref{theorem-chain-rule-1-level-sets} and \ref{theorem-chain-rule-2-phi}). We remark here that this chain rule is still very distant from the deep results which can be obtained in the Euclidean setting, but it can be useful when dealing with applications. 

Indeed, as an application, we combine the extended Ocone-Karatzas formula and the chain rule, to obtain explicit representations for some functionals of the Wiener process (Propositions \ref{prop-app-1} and \ref{prop-app-2}). While the former result is well-known, nevertheless we believe that the theory of $BV$ functions provides, we believe, a clear proof without advanced technical results, such as the theory of distributions on Wiener spaces.

This paper is organized as follows: in Section \ref{section-2}, we fix some notation and provide the definition of $BV$ functions together with the main approximation result, Theorem \ref{theorem-ambrosio-fukushima}, without proof. Other technical results, e.g.\ on the Orlicz space $L\log^{1/2}L$, are collected, for the convenience of the reader. In Section \ref{section-3}, we investigate a chain rule for a special class of $BV$ functions. Here, we use an approximation result for Euclidean $BV$ functions, though an elementary proof is given for the special case of indicator functions of level sets. In Section \ref{section-4}, the extended Ocone-Karatzas formula is established, after some remarks on the notion of predictability. In Section \ref{section-5}, applications and examples are discussed.

\section{Notation and preliminary results}\label{section-2}

\subsection{Malliavin calculus}

Let us fix $T>0$ and consider the classical Wiener space $\pa{\Omega, \mathcal{A}, \Prob}$, where $\Omega = C_0\pa{[0,T]}$ is the space of real continuous functions $\omega$, with $\omega\pa{0}=0$, $\mathcal{A}$ is the $\sigma$-algebra of Borel sets of $\Omega$, and $\Prob$ is the Wiener measure on $\mathcal{A}$. By definition, with respect to the probability measure $\Prob$, the canonical process $\pa{W_t}_{0\le t \le T}$, given by $W_t\pa{\omega} = \omega\pa{t}$, is a Wiener process starting from the origin.

For a complete exposition of the Malliavin calculus on the classical Wiener space, we refer to \cite{nualart1995malliavin}. Here, we recall some basic facts, together with some minor changes in the notation, which turn out to be more convenient, when dealing with $BV$ functions.

We write $L^2\pa{0,T}$ for $L^2\pa{[0,T],\lambda}$, where $\lambda$ is the Borel-Lebesgue measure, restricted to the interval $\qua{0,T}$. We write $\ang{k_1,k_2}_2$ for the scalar product between $k_1,k_2 \in L^2\pa{0,T}$.

The classical Cameron-Martin space $H^1_0 \subset \Omega$ is the space of real continuous functions $h$ on $\qua{0,T}$, such that, for some $k \in L^2\pa{0,T}$,
\[ h\pa{t} = \int_0^t k\pa{s} ds\virgola \]
for every $0\le t \le T$. Therefore $k = h'$,  $\lambda$-almost everywhere. Endowed with the scalar product $\ang{h_1,h_2}_{H_0^1} = \ang{h'_1,h'_2}_2$, $H^1_0$ is a Hilbert space isomorphic to $L^2\pa{0,T}$.

The Wiener integral construction allows us to identify $L^2\pa{0,T}$ with a subspace of $L^2\pa{\Omega, \mathcal{A}, \Prob}$, and therefore $H^1_0$ with the same subspace, with the correspondence $h \mapsto W\pa{h'} = \int_0^T h'\pa{s} dW_s$.

For $n\ge 1$, given an $n+1$-uple  of times $J = \pa{t_0, \ldots, t_n}$, with $0 \le t_0 < \ldots < t_n \le T$, we define
\begin{equation}\label{equation-delta-W} \Delta_J W = \pa{W_{t_1} - W_{t_0}, \ldots, W_{t_n} - W_{t_{n-1} } } \punto\end{equation}
We will often omit the subscript $J$ and simply write $\Delta W$. 

A smooth function $f$ is a real function of the form $f = \phi \pa{ \Delta_J W}$ for some $J$ as above and some $\phi \in C^1_b\pa{\erre^n}$. 
We remark that $\Prob$ plays no role in this definition, thus allowing us to consider different measures on $\pa{\Omega,\mathcal{A}}$.

It can be shown that any $f = \phi\pa{\Delta W}$, with a bounded continuous $\phi$, is the pointwise limit of a uniformly bounded sequence of smooth functions: indeed, it is sufficient to approximate $\phi$ with a sequence of smooth functions on $\erre^n$. Then, by the monotone class theorem, or another equivalent approximation argument, we obtain that, given a finite positive measure $\mu$, smooth functions are dense in $L^p\pa{\Omega, \mathcal{A}, \mu}$, for every $1\le p < \infty$. In particular, this holds true for $\mu = \Prob$. We will often write $L^p\pa{\Prob} = L^p\pa{\Omega, \mathcal{A}, \Prob}$.

By definition, the Malliavin derivative of a smooth function $f = \phi\pa{\Delta W}$ is the application $\nabla f: \Omega \to L^2\pa{0,T}$,
\[ \nabla f \pa{\omega}= \sum_{i=1}^n \partial_i \phi\pa{ \Delta W \pa{\omega}} I_{]t_{i-1},t_i]}  \in L^2\pa{0,T} \punto \]
The Malliavin derivative is well-defined, since
\[ \ang{\nabla f, h'}_2 = \sum_{i=1}^n \partial_i \phi\pa{\Delta W } \qua{h\pa{t_i}-h\pa{t_{i-1}} } = \partial_{h} f \punto\]
Given $h\in H^1_0$ and smooth functions $f,g$, if we write $\partial^*_h g = \partial_h g - g W\pa{h'}$, the integration-by-parts formula holds,
\[\E\qua{ \pa{\partial_h f} g } = - \E\qua{ f \partial^*_h g }\punto  \]
It follows that, for every $p\ge 1$, the linear operator $\nabla$ is well-defined on a dense subset of $L^p\pa{\Prob}$, with values in $L^p\pa{\Prob;L^2\pa{0,T}}$, and closable. We denote the domain of its closure by $\mathbb{D}^{1,p}$.

By an explicit approximation, $\nabla W\pa{k} = k$, for every $k \in L^2\pa{0,T}$. Therefore, our definition of smooth functions provides a construction of the Malliavin derivative, equivalent to that in \cite{nualart1995malliavin}.

The fact that the Malliavin derivative of a random variable can be identified with a process is a consequence of the following elementary result, which can be easily proved by a density argument. We prefer to state it as a lemma, since it will be used again when dealing with $BV$ functions.

\begin{lemma}\label{lemma-measure-l2-measure-product}
Given a positive measure $\nu$ on $\pa{\Omega,\mathcal{A}}$, there is a linear continuous immersion
\[ L^1\pa{\Omega,\nu; L^2\pa{0,T}} \to L^1\pa{\Omega\times [0,T], \mathcal{A}\otimes \cB\pa{[0,T]}, \nu\otimes \lambda} \]
that maps every $F$ to a process $F^\nu$ such that, $\nu$ almost everywhere, the function $t  \mapsto F^\nu_t\pa{\omega}$ coincides with $F\pa{\omega}$, $\lambda$ almost everywhere. Moreover,
\[ \int_{\Omega\times [0,T]} \abs{ F^\nu } d\pa{ \mu\otimes \lambda} \le T^{1/2}\int _\Omega \abs{F }_2 d\nu \punto \]
\end{lemma}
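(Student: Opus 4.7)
The strategy is a density argument: first define the map on an obvious dense subspace where both sides make sense pointwise, verify the bound there, then extend by continuity and check that the pointwise identification survives.

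First I would define $F \mapsto F^\nu$ on \emph{simple} elements of $L^1\pa{\Omega,\nu;L^2\pa{0,T}}$, i.e.\ finite sums of the form
\[ F = \sum_{i=1}^N 1_{A_i} k_i, \qquad A_i \in \mathcal{A},\ \nu\pa{A_i} < \infty,\ k_i \in L^2\pa{0,T},\]
(with the $A_i$ disjoint). For such $F$ the obvious candidate is
\[ F^\nu\pa{\omega,t} := \sum_{i=1}^N 1_{A_i}\pa{\omega}\, k_i\pa{t}, \]
which is jointly measurable on $\pa{\Omega\times[0,T],\mathcal{A}\otimes\mathcal{B}\pa{[0,T]}}$ after picking Borel representatives of the $k_i$. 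For this definition the identification $t\mapsto F^\nu_t\pa{\omega} = F\pa{\omega}$ $\lambda$-a.e.\ is immediate, and the norm bound
\[ \int_{\Omega\times[0,T]} \abs{F^\nu}\, d\pa{\nu\otimes\lambda} = \sum_i \nu\pa{A_i}\norm{k_i}_{L^1\pa{0,T}} \le T^{1/2}\sum_i \nu\pa{A_i}\norm{k_i}_{L^2\pa{0,T}} = T^{1/2}\int_\Omega \abs{F}_2\, d\nu \]
follows from Cauchy-Schwarz applied on each interval $[0,T]$. In particular $F\mapsto F^\nu$ is linear and continuous on simple functions.

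Next, since simple functions are dense in the Bochner space $L^1\pa{\Omega,\nu;L^2\pa{0,T}}$ and the target space $L^1\pa{\nu\otimes\lambda}$ is complete, the map extends uniquely to a continuous linear operator on the whole space, which I still denote $F\mapsto F^\nu$. The norm bound passes to the limit by continuity.

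The only non-routine step is verifying that, for an arbitrary $F$, the extended $F^\nu$ still has the required fibrewise identification with $F$. Given $F$, choose simple $F^{\pa{n}}$ with $F^{\pa{n}}\to F$ in $L^1\pa{\nu;L^2\pa{0,T}}$. Along a subsequence, $F^{\pa{n}}\pa{\omega}\to F\pa{\omega}$ in $L^2\pa{0,T}$ for $\nu$-a.e.\ $\omega$, and simultaneously $\pa{F^{\pa{n}}}^\nu \to F^\nu$ in $L^1\pa{\nu\otimes\lambda}$; extracting again and using Fubini, for $\nu$-a.e.\ $\omega$, the slice $t\mapsto\pa{F^{\pa{n}}}^\nu_t\pa{\omega}$ converges to $t\mapsto F^\nu_t\pa{\omega}$ in $L^1\pa{0,T}$. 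On the other hand the simple-function identification gives $\pa{F^{\pa{n}}}^\nu_t\pa{\omega} = F^{\pa{n}}\pa{\omega}\pa{t}$ for $\lambda$-a.e.\ $t$, and by $L^2$-convergence the right-hand side converges to $F\pa{\omega}\pa{t}$ in $L^2$, hence also in $L^1$. Uniqueness of the limit then yields $F^\nu_t\pa{\omega} = F\pa{\omega}\pa{t}$ for $\lambda$-a.e.\ $t$, for $\nu$-a.e.\ $\omega$.

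The main obstacle I expect is precisely this last measure-theoretic bookkeeping: $F^\nu$ is only defined up to a $\nu\otimes\lambda$-null set, while the claim refers to $\lambda$-a.e.\ equality on $\nu$-a.e.\ fibre, and joining these two kinds of almost-everywhere statements requires the Fubini plus subsequence argument sketched above.
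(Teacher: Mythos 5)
Your proof is correct and follows exactly the route the paper indicates: the paper states this lemma without proof, remarking only that it "can be easily proved by a density argument," which is precisely what you carry out (definition on simple functions, Cauchy--Schwarz for the $T^{1/2}$ bound, extension by continuity, and the Fubini-plus-subsequence step to recover the fibrewise identification). No gaps.
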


Therefore, given $f \in \cD^{1,1}$, we write $\pa{\partial_t f}_{0\le t \le T}$ for the process $\nabla f ^\Prob$ given by the lemma above. For every $k \in L^2\pa{0,T}$,
\[ \ang{\nabla f, k}_2 = \int_0^T \partial_s f \, k\pa{s} ds \virgola \quad \textrm{  $\Prob$ a.s. }  \]

\subsection{ The space $L \log^{1/2} L$ }

We write $A_{1/2}$ for the real continuous convex function
\[ x \mapsto A_{1/2}\pa{x} = \int_0^{\abs{x}} \log^{1/2}\pa{1+s} ds \punto\]

By definition, $L \log^{1/2}L = L\log^{1/2}L\pa{\Prob}$ is the vector space of real random variables $X$, such that, for some $k>0$, $A_{1/2}\pa{X/k}\in L^1\pa{\Prob}$. Endowed with the Luxembourg norm
\[ \norm{X}_{L \log^{1/2}L }  = \inf\gra{k>0 \, : \, \E\qua{A_{1/2}\pa{X/k}} \le 1 } \virgola\]
it is a particular case of an Orlicz space, and therefore a Banach space.

Orlicz spaces generalize $L^p$ spaces and, if the function which defines the norm does not grow too fast, many properties can be stated and proved in exactly the same way. $L\log^{1/2} L$ is such an example of slow growth, since for every real $0 < k_1 < k_2 $ and every real $x$,
\[ \log^{1/2}\pa{1+x/k_1}/k_1 < \pa{k_2/k_1}^{3/2} \log^{1/2}\pa{1 +  x/k_2}/k_2 \punto\]
It follows that
\[ \E \qua{A_{1/2} \pa{X/k_1} } \le \pa{k_2/k_1}^{3/2} \E \qua{A_{1/2} \pa{X/k_2} } \virgola\]
and so, if $X \in L \log^{1/2} L$ then, for every $k>0$, $A_{1/2}\pa{X/k}$ is integrable.

Given $X \in L \log^{1/2} L$ and a real random variable $Y$ with centred normal law then, for some $C\pa{Y}$,
\begin{equation}\label{equation-holder-orlicz} \E\qua{\abs{ X Y} } \le C\pa{Y} \norm{X} _{L \log^{1/2} L} \punto \end{equation}
This follows, for example, from Young's inequality for $A_{1/2}$,
\[ \abs{xy} \le A_{1/2} \pa{x} + \int_0^{\abs{y}}\pa{e^{t^2} -1} dt \le A_{1/2} \pa{x} + \abs{y} e^{y^2} \virgola\]
and taking expectation, with $x = X/\norm{X} _{L \log^{1/2}L }$ and $y = Y/2\sqrt{\E\qua{Y^2}}$.

In the context of Malliavin differentiable functions, an important consequence of the result above is that, given a smooth function $g$ and $h \in H^1_0$, $\E\qua{f \partial^*_h g}$ is well-defined whenever $f \in L \log^{1/2} L$.

\begin{remark}\label{remark-embedding-d1-llog}
Another fundamental fact, of which we will make implicit use, since it is partially contained in Theorem \ref{theorem-ambrosio-fukushima}, is the existence of a continuous embedding of $\mathbb{D}^{1,1}$ into $L \log^{1/2} L$ (see \cite{hino01}, Proposition 3.2). We remark that, in turn, this is a consequence of the Gaussian isoperimetric inequality (see \cite{springerlink:10.1007/BFb0095676}).
\end{remark}

We turn now to a technical result concerning the convergence of closed martingales in the space $L \log^{1/2} L$, which will be used in the proof of Theorem \ref{theorem-clark-ocone-bv}.

\begin{lemma}\label{lemma-martingale-llog}
Given a discrete filtration $\mathcal{G} = \pa{\mathcal{G}_n}_{n\ge1}$, with $\bigvee _n G_n = \mathcal{A}$, and a $\mathcal{G}$-martingale $\pa{M_n}_{n\ge1}$, closed by a random variable $M \in L\log^{1/2} L$ (so that for every $n\ge 1$, $M_n = \E\qua{ M | \mathcal{G}_n}$), then the sequence $\pa{M_n}_{n\ge 1}$ converges to $M$ in $L\log^{1/2} L$. 
\end{lemma}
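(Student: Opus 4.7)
The plan is to combine conditional Jensen's inequality with Vitali's convergence theorem, bypassing any direct manipulation of the Luxembourg norm.

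First, since $A_{1/2}$ is convex and even, conditional Jensen gives, for every $k>0$,
\[ A_{1/2}\pa{M_n/k} \;=\; A_{1/2}\pa{\E\qua{M | \mathcal{G}_n}/k} \;\le\; \E\qua{A_{1/2}\pa{M/k} \,\big|\, \mathcal{G}_n} \virgola \]
so the nonnegative sequence $\pa{A_{1/2}\pa{M_n/k}}_{n\ge 1}$ is dominated by the closed $\mathcal{G}$-martingale with terminal value $A_{1/2}\pa{M/k}$, which lies in $L^1\pa{\Prob}$ by the hypothesis $M\in L\log^{1/2}L$. In particular, this sequence is uniformly integrable.

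Next, again by convexity and evenness of $A_{1/2}$,
\[ A_{1/2}\pa{(M_n-M)/(2k)} \;\le\; \tfrac{1}{2}\, A_{1/2}\pa{M_n/k} + \tfrac{1}{2}\, A_{1/2}\pa{M/k} \virgola \]
so that the family on the left is itself uniformly integrable for every $k>0$. Meanwhile, since $M\in L^1\pa{\Prob}$ and $\bigvee_n \mathcal{G}_n = \mathcal{A}$, Doob's classical $L^1$-martingale convergence theorem yields $M_n \to M$ $\Prob$-almost surely; by continuity of $A_{1/2}$, also $A_{1/2}\pa{(M_n-M)/(2k)} \to 0$ almost surely. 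Vitali's theorem then gives
\[ \E\qua{A_{1/2}\pa{(M_n-M)/(2k)}} \;\to\; 0 \punto \]

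Since $k>0$ was arbitrary, for any prescribed $\eta>0$ we can take $k=\eta/2$ and conclude that eventually $\E\qua{A_{1/2}\pa{(M_n-M)/\eta}}\le 1$, i.e.\ $\norm{M_n-M}_{L\log^{1/2}L}\le \eta$. Letting $\eta\to 0$ finishes the proof. The argument has no real obstacle: the only subtle point is that one must read Luxembourg-norm convergence as modular convergence at \emph{every} scale $k$, which is immediate from the defining infimum and does not require invoking the $\Delta_2$-type growth estimate recorded earlier in the section.
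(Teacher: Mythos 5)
Your proof is correct, and it rests on the same ingredients as the paper's: almost sure convergence of the closed martingale, the convexity splitting $A_{1/2}\pa{(M_n-M)/(2k)} \le \tfrac12 A_{1/2}\pa{M_n/k} + \tfrac12 A_{1/2}\pa{M/k}$, conditional Jensen's inequality, and uniform integrability of the closed martingale $\pa{\E\qua{A_{1/2}\pa{M/k} \mid \mathcal{G}_n}}_{n\ge1}$. The difference is one of packaging: the paper extracts the conclusion by hand, splitting on the event $\gra{\abs{M_n-M} > \delta\epsilon}$ and showing only that $\limsup_n \E\qua{A_{1/2}\pa{(M_n-M)/\epsilon}} \le 1/2 < 1$, whereas you funnel the same estimates through Vitali's convergence theorem and obtain the stronger statement that the modular tends to $0$ at every scale; this is arguably cleaner and makes the uniform-integrability mechanism explicit. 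One small correction: your closing remark that the $\Delta_2$-type growth estimate is not needed is not quite accurate. You do use it implicitly when asserting that $A_{1/2}\pa{M/k}$ is integrable for \emph{every} $k>0$: the definition of $L\log^{1/2}L$ only guarantees this for \emph{some} $k$, and your argument requires it for $k=\eta/2$ with $\eta$ arbitrarily small, which is exactly what the growth estimate recorded in Section 2 provides. Since that fact is established in the paper, this is a misattribution rather than a gap.
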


\begin{proof}
It is well known that $\pa{M_n}_{n\ge1}$ converges $\Prob$ a.s.\ to $M$ and therefore in probability. We fix $\epsilon >0$  and we show that
\[ \limsup_n \E\qua{ A_{1/2}\pa{ \frac{M_n - M}{\epsilon} } } < 1 \punto\]
Since $A_{1/2}\pa{0} = 0$, by continuity, there exists some $\delta > 0$ such that $A_{1/2}\pa{x}  \le 1/2$ if $\abs{x} \le \delta$. We write $B\pa{n}  = \gra{ \abs{M_n - M} > \delta \epsilon }$, so that
\[ \E\qua{ A_{1/2}\pa{ \frac{M_n - M}{\epsilon} } }  \le \frac1 2 + \E\qua{I_{B\pa{n} } A_{1/2}\pa{ \frac{M_n - M}{\epsilon} } } \punto\]
Moreover, $\lim_n \Prob\pa{B\pa{n}}=0$. By the elementary properties of $A_{1/2}$,
\[ A_{1/2}\pa{ \frac{M_n - M}{\epsilon} } \le \frac 1 2 A_{1/2}\pa{ \frac{2M}{\epsilon} }  + \frac 1 2 A_{1/2}\pa{ \frac{2M_n}{\epsilon} } \punto\]
$M$ belongs to $L\log^{1/2} L$, so the first summand above is integrable and
\[ \limsup_n \E\qua{I_{B\pa{n} } A_{1/2}\pa{ \frac{2M}{\epsilon} } } = 0 \punto \]
Since $A_{1/2}$ is convex, the second summand above is $\Prob$ a.s.\ not greater than $1/2 \E\qua{ A_{1/2}\pa{ 2M/\epsilon} | \mathcal{G}_n }$, by Jensen's inequality for conditional expectations. Again, $A_{1/2}\pa{ 2M/\epsilon}$ is integrable so $\pa{\E\qua{ A_{1/2}\pa{ 2M/\epsilon} | \mathcal{G}_n }}_{n\ge1}$ is a $\mathcal{G}$-martingale closed in $L^1$ and therefore uniformly integrable. We conclude that
\[ \limsup_n\E\qua{I_{B\pa{n} } A_{1/2}\pa{ \frac{2M_n}{\epsilon} } } \le \limsup_n\E\qua{I_{B\pa{n} } \E\qua{ A_{1/2}\pa{\frac{2M}{\epsilon}} | \mathcal{G}_n } } = 0 \punto\]\end{proof}

\subsection{ $L^2\pa{0,T}$-valued measures }

We write $\mathcal{M} = \mathcal{M}\pa{\Omega; L^2\pa{0,T} }$ for the space of  $L^2\pa{0,T}$-valued $\sigma$-additive measures on $\pa{\Omega,\mathcal{A}}$, with finite total variation $\abs{\mu}$. We recall that $\abs{\mu}$ is a measure on $\pa{\Omega,\mathcal{A}}$, given by
\[ \abs{\mu} \pa{A} := \sup\gra{ \sum_{n\ge1} \abs{\mu\pa{A_n}}_2: A = \bigcup_{n\ge1} A_n } < \infty \virgola \]
where the supremum runs along every countable measurable partition of $A$.

By the polar decomposition theorem, given $\mu \in \mathcal{M}$, there exists an $L^2\pa{0,T}$-valued measurable application $\sigma$, with $\abs{\sigma}\pa{\omega} \le 1$,  for all $\omega$, such that, for every measurable set $A$, and every $k \in L^2\pa{0,T}$,
\[ \ang{\mu\pa{A},k}_2 = \int I_A \ang{\sigma,k}_2 d\abs{\mu}\punto \]
The member on the right clearly defines a real measure, which we denote by $\ang{\mu, k}_2$. The decomposition above allows us to integrate $L^2\pa{0,T}$-valued applications. We define
\[ \int \ang{F, d\mu}_2 = \int \ang{\sigma, F}_2 d\abs{\mu}\virgola \]
if $\ang{\sigma, F}_2 \in L^1\pa{\abs{\mu}}$. Since such applications can be seen as processes, we identify $\mu$ with a real measure on the product space. More precisely, we apply Lemma \ref{lemma-measure-l2-measure-product} with $\nu = \abs{\mu}$ and consider the measure $\tilde{\mu} = \sigma^{\abs{\mu}} . \pa{\abs{\mu} \otimes \lambda}$. Given a process $F = f I_{]s,t]}$, with smooth $f$ and $0\le s \le t \le T$, then
\[ \int \ang{F, d\mu}_2 = \int f d\ang{\mu, I_{]s,t]} }_2 = \int_\Omega f\pa{\omega}\qua{ \int_s^t \sigma^{\abs{\mu}}\pa{\omega,r}dr }\abs{\mu}\pa{d\omega} \punto \]

\subsection{$BV$ functions}

The following condition shows that the integration-by-parts formula plays a central role in the theory of Malliavin calculus: $f\in \cD^{1,p}$, with $1 \le p < \infty$, if and only if $f \in L^p\pa{\Prob}$  and there exists some $F \in L^p\pa{\Omega,\Prob; \ldue}$ such that, for every smooth $g$ and every $h\in H^1_0$, 
\begin{equation} \label{equation-integration-by-parts} \E\qua{ \ang{ F, h'}_2 g } = - \E\qua{ f \partial^*_h g } \punto \end{equation}
In such a case, $F = \nabla f$.

We can read the left member in \eqref{equation-integration-by-parts} as the integral of $g$, with respect to the real measure $\ang{F,h'}.\Prob$. Informally, a function $f$ is said to be of bounded variation, if we require only that there exists a measure, such that the same condition is satisfied.

\begin{definition}
A real function $f$ is said to be of \emph{bounded variation} ($BV$), with respect to $\Prob$, if $f \in L\log^{1/2}L \pa{\Prob}$, and there exists a measure $Df \in \mathcal{M}$, such that, for every $h \in H^1_0$  and every smooth $g$,
\[ \int g \, d\ang{Df,h'}_2 = - \E\qua{f \, \partial_h ^*g }\punto\]
The quantity  $\abs{Df}\pa{\Omega}$ is called the \emph{total variation} of $f$. The real measure $\ang{Df,h'}_2$ is denoted with  $D_h f$.
\end{definition}

\begin{remark}
In the article \cite{ambrosio1}, it is required for a smooth function to be of the form $\phi \pa{W\pa{k_1}, \ldots, W\pa{k_n}}$, where $\phi$ is smooth and each $k_i \in \ldue$ is a function of bounded variation on $[0,T]$, so that $W\pa{k_i}$ is linear and continuous on $\Omega$. Therefore, although the definitions are formally identical, the class of $BV$ functions introduced above could be larger than that considered there. However, by an approximation argument, it can be shown that they coincide.\end{remark}

Any function $f \in \mathbb{D}^{1,1}$ is $BV$, with $Df = \nabla f. \Prob$, because the integrability condition follows from the continuous embedding of $\mathbb{D}^{1,1}$ in $L\log^{1/2}L\pa{\Prob}$ (see Remark \ref{remark-embedding-d1-llog}). Actually, a $BV$ function $f$ admits a Malliavin derivative if and only if $\abs{Df}$ is absolutely continuous with respect to $\Prob$.

As a consequence of the general results about $L^2\pa{0,T}$-valued measures, in all what follows we will identify the measure $Df$ with a measure on the product space $\Omega\times [0,T]$. We can even define a $BV$-analogue of the process $\pa{\partial_t f}_{0\le t \le T}$: given a version $\sigma$ of the density of $Df$ with respect to $\abs{Df}\otimes\lambda$,  we define $D_tf = \sigma\pa{t} . \pa{\abs{Df}\otimes\lambda}$. The family of measures $\pa{D_tf}_{0\le t \le T}$ is defined up to $\lambda$-neglegible sets.

We consider now the approximability of $BV$ functions with regular functions. To make a comparison with the differentiable case, we recall that, for $p>1$, given a sequence $\pa{f_n}_{n\ge1}$ in $\cD^{1,p}$, convergent to some $f$ in $L^p\pa{\Prob}$ and such that $\pa{\nabla f_n}_{n\ge1}$ is bounded in $L^p\pa{\Omega, \Prob;\ldue}$, then $f \in \cD^{1,p}$ (see \cite{nualart1995malliavin}, Lemma 1.2.3, p.\ 28, for the case $p=2$). Such a conclusion does not hold, for $p=1$: however, the next theorem shows that $f$ must be a $BV$ function, and all $BV$ functions can be obtained with a similar approximation. Proofs can be found in \cite{hino01}, Theorem 3.7, or in \cite{ambrosio1}, Theorem 4.1.

\begin{theorem}\label{theorem-ambrosio-fukushima}
Given $f \in L^1\pa{\Prob}$, the following conditions are equivalent:
\begin{enumerate}
\item $f$ is of bounded variation;
\item there exists a sequence of functions $\pa{f_n}_{n\ge 1}$, bounded in $\mathbb{D}^{1,1}$ and convergent to $f$ in $L^1\pa{\Prob}$;
\end{enumerate}
In such a case,
\[ \abs{Df}\pa{\Omega} \le \liminf_{n} \E\qua{ \abs{\nabla f_n}_2} \virgola\]
and there exists a sequence $\pa{f_n}_{n\ge1}$ such that equality is attained.
\end{theorem}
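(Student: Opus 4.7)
The plan is to prove the two implications separately, deriving the lower-semicontinuity inequality as a by-product of one direction and attaining equality via an explicit regularization in the other.

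For $(ii)\Rightarrow(i)$, suppose $(f_n)$ is bounded in $\mathbb{D}^{1,1}$ and converges to $f$ in $L^1\pa{\Prob}$. I would first verify $f\in L\log^{1/2}L$: by Remark \ref{remark-embedding-d1-llog}, $(f_n)$ is bounded in that Orlicz norm, so passing to a $\Prob$-a.s.\ convergent subsequence and applying Fatou's lemma to the convex functional $A_{1/2}$ gives the required integrability. Next, the measures $\mu_n = \nabla f_n \, . \, \Prob \in \mathcal{M}$ satisfy $\abs{\mu_n}\pa{\Omega}\le \E\qua{\abs{\nabla f_n}_2}$ and are therefore uniformly bounded in total variation. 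By a weak-$*$ compactness argument, combined with a diagonal selection over a countable dense family $\gra{h^{(k)}}$ in $H^1_0$ and a dense family of smooth test $g$, one extracts a subsequence and a measure $\mu\in\mathcal{M}$ with $\int g\,d\ang{\mu_n,h'}_2 \to \int g\,d\ang{\mu,h'}_2$ for every smooth $g$ and every $h\in H^1_0$. Passing to the limit in the identity $\int g\,d\ang{\mu_n,h'}_2 = -\E\qua{f_n\,\partial^*_h g}$—whose right-hand side converges using $f_n\to f$ in $L^1$, the boundedness of $\partial_h g$, and the duality \eqref{equation-holder-orlicz} together with the $L\log^{1/2}L$-boundedness of $(f_n)$ for the term $g\,W\pa{h'}$—identifies $\mu = Df$.

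For $(i)\Rightarrow(ii)$, I would regularize by the Ornstein--Uhlenbeck semigroup $\pa{P_\epsilon}_{\epsilon>0}$. Mehler's representation yields $P_\epsilon f \in \mathbb{D}^{1,p}$ for every $p\ge 1$, and the commutation $\nabla P_\epsilon f = e^{-\epsilon}P_\epsilon \nabla f$, reformulated at the level of $L^2\pa{0,T}$-valued measures through the defining integration-by-parts identity for $Df$, combined with a Jensen-type inequality for the convex functional $\abs{\cdot}_2$, gives the variation bound $\E\qua{\abs{\nabla P_\epsilon f}_2} \le \abs{Df}\pa{\Omega}$. Since $P_\epsilon f\to f$ in $L^1\pa{\Prob}$ as $\epsilon\to 0^+$, the family $\pa{P_{1/n}f}_{n\ge 1}$ furnishes the required approximating sequence, and the same bound shows equality is attained in the lower-semicontinuity estimate.

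The lower-semicontinuity inequality itself follows from the compactness step: the weak-$*$ convergence $\mu_n \to Df$ forces $\abs{Df}\pa{\Omega}\le\liminf_n\abs{\mu_n}\pa{\Omega}\le\liminf_n\E\qua{\abs{\nabla f_n}_2}$. The main obstacle is precisely this compactness argument: $\mathcal{M}$ is not obviously a separable dual, so extracting a single subsequence that works simultaneously for all test pairs $(g,h)$, and identifying the resulting scalar limits as coming from a genuine $L^2\pa{0,T}$-valued $\sigma$-additive measure with a polar decomposition of the required form, requires a careful treatment---either via diagonalization over countable dense families of directions $h\in H^1_0$ and of smooth $g$, combined with a reconstruction of the vector-valued measure, or via an embedding of $\mathcal{M}$ into a dual of a separable space of test functions on $\Omega\times [0,T]$.
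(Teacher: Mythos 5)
First, note that the paper does not prove this theorem at all: it is quoted from the literature, with the proofs attributed to \cite{hino01} (Theorem 3.7) and \cite{ambrosio1} (Theorem 4.1). So there is no internal proof to compare against; your proposal can only be judged on its own merits and against the strategy of those references. On that basis, your overall architecture is the right one and matches the standard arguments: lower semicontinuity and existence of $Df$ via a compactness argument for the measures $\nabla f_n.\Prob$ in one direction, and Ornstein--Uhlenbeck regularization with the commutation relation $\nabla P_\epsilon f = e^{-\epsilon}P_\epsilon \nabla f$ (suitably weakened to the measure level) to produce an optimal approximating sequence in the other. The verification that $f\in L\log^{1/2}L$ via the embedding of Remark \ref{remark-embedding-d1-llog} and Fatou applied to $A_{1/2}$ is correct, and the semigroup bound $\E\qua{\abs{\nabla P_\epsilon f}_2}\le e^{-\epsilon}\abs{Df}\pa{\Omega}$ is indeed how equality in the lower-semicontinuity estimate is attained. (A minor overstatement: for $f\in L^1\pa{\Prob}$ only, you should claim $P_\epsilon f\in\mathbb{D}^{1,1}$, not $\mathbb{D}^{1,p}$ for all $p$; the former is all you need.)

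The genuine gap is the one you yourself flag and then do not close: the extraction of a $\sigma$-additive $L^2\pa{0,T}$-valued measure $\mu\in\mathcal{M}$ as a limit of $\mu_n=\nabla f_n.\Prob$. This is not a routine diagonalization. Since $\Omega=C_0\pa{[0,T]}$ is not locally compact, bounded sequences of signed measures are weak-$*$ precompact only in the dual of $C_b\pa{\Omega}$, whose elements are in general merely finitely additive; countable additivity of the limit requires uniform tightness of the total variations $\abs{\mu_n}=\abs{\nabla f_n}_2.\Prob$, which does \emph{not} follow from boundedness in $L^1\pa{\Prob}$ of the densities (mass can concentrate on sets of vanishing $\Prob$-measure outside any fixed compact set). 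Moreover, even granting scalar limits $\ang{\mu,k}_2$ for $k$ in a countable dense family, reassembling them into a single vector measure with finite total variation and a polar decomposition is an additional step. This is precisely where the cited proofs invest their real work --- Fukushima and Hino through the theory of smooth measures associated with Dirichlet forms, Ambrosio et al.\ through finite-dimensional projections (where Riesz representation applies) followed by a limit exploiting the tightness of the Gaussian measure and the semigroup. As written, your argument establishes that the candidate functional $g\mapsto-\E\qua{f\,\partial^*_h g}$ is bounded by $\pa{\liminf_n\E\qua{\abs{\nabla f_n}_2}}\abs{h'}_2\norm{g}_\infty$, but not that it is represented by a measure; without that, neither the implication $(ii)\Rightarrow(i)$ nor the inequality $\abs{Df}\pa{\Omega}\le\liminf_n\E\qua{\abs{\nabla f_n}_2}$ is actually proved.
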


We remark that in the theorem above, the second condition does not mention the space $L\log^{1/2}L$: this follows by the continuous embedding discussed in Remark \ref{remark-embedding-d1-llog}. In this sense, the extra integrability condition required in the definition of $BV$ functions is technical but natural.

\section{A chain rule}\label{section-3}

For a composite function $f = \phi \circ g$, under certain assumptions on $\phi$ and $g$, we can conclude that $f$ is $BV$ and write an explicit formula for the integral of smooth functions with respect to any measure $D_hf$.

A chain rule for the Malliavin derivative (see \cite{nualart1995malliavin}, Proposition 1.2.3, p.\ 28) reads as follows: given $g_1,\ldots,g_n \in \cD^{1,1}$ and $\phi \in C^1_b\pa{\erre^n}$, then $f=\phi\pa{g_1,\ldots,g_n} \in \cD^{1,1}$ and
\[ \nabla f =  \sum_{i=1}^n \partial_i\phi\pa{g_1,\ldots,g_n} \nabla g_i \punto\]

When $\phi$ is a Euclidean $BV$ function, we would like to conclude that $f$ is $BV$ (for the general theory of Euclidean $BV$ functions, see \cite{ambfus00}). Before stating some precise results, we give here a formal derivation of the chain rule in the simple case of $g_i = W\pa{k_i}$, with $\gra{k_1, \ldots, k_n}$ orthonormal in $L^2\pa{0,T}$. Under this assumption, the joint law of $g = \pa{W\pa{k_1},\ldots, W\pa{k_n} }$ is the standard Gaussian law on $\erre^n$ and we write $\rho$ for its continuous density. Given a bounded random variable $u$, there exits some Borel function $v$ such that $\E\qua{u | g } = v\pa{g }$. For brevity, we write $v\pa{x}= \E\qua{u | g = x}$. For $h \in H^1_0$,  when $\phi \in C^1_b\pa{\erre^n}$, we integrate the chain rule above:
\[ \E\qua{ u \ang{h', \nabla f} } = \sum_{i=1}^n \E\qua{ u \partial_i\phi\pa{g} \ang{k_1,h'}_2 } \punto\]
The left member above can be replaced with $\int u D_h f$, which is defined when $f$ is $BV$. For the right member, we take the conditional expectation with respect to $g$ and we find that
\[ \int u D_h f = \sum_{i=1}^n\int_{\erre^n} \E\qua{u | g = x } \ang{k_1,h'}_2 \rho\pa{x} \partial_i\phi\pa{x} \lambda^n\pa{dx} \punto\]
When $\phi$ is a Euclidean $BV$ function, we would like to replace $\partial_i\phi\pa{x} \lambda^n\pa{dx}$ with $D_i\phi\pa{dx}$. However, we notice that the integrand in the right member above is defined $\lambda^n$ a.e.\ and, since $\abs{D_i\phi}$ can be singular with respect to $\lambda^n$, it is clear that some assumptions on $u$ are necessary to give a precise meaning to the expression. In this setting, the following theorem can be established, but we omit the proof.

\begin{theorem}
With the notation as above, if $u$ is cylindrical and continuous, then there exists a continuous representative of  $\E\qua{u | g= x }$ and, if $\rho. \abs{D\phi}$ is a finite measure, then $f$ is $BV$ and
\[  \int u D_h f = \sum_{i=1}^n\int_{\erre^n} \E\qua{u | g = x } \ang{k_1,h'}_2 \rho\pa{x} D_i\phi\pa{dx} \virgola\]
where the integrand is intended as its continuous representative.
\end{theorem}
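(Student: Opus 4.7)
I would proceed by mollifying $\phi$ in the Euclidean variable and passing to the limit, using Theorem \ref{theorem-ambrosio-fukushima} to recover the $BV$ property of $f$ along the way.

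First I would construct a continuous representative of $\E\qua{u \mid g = x}$. Since $u$ is cylindrical continuous, write $u = \psi\pa{\Delta_J W}$ with $\psi \in C_b\pa{\erre^m}$. The vector $\pa{\Delta_J W, g}$ is jointly Gaussian, so, exploiting the orthonormality of the $k_i$'s, regress $\Delta_J W = A g + Z$, where $Z$ is a centred Gaussian vector independent of $g$ and $A$ is the cross-covariance matrix. Then
\[ v\pa{x} := \E\qua{\psi\pa{A x + Z}} \]
is a bounded $C^\infty$ version of $\E\qua{u \mid g = x}$.

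Next, by convolution with a mollifier I would pick $\phi_n \in C^\infty\pa{\erre^n}$ with $\phi_n \to \phi$ in $L^1\pa{\rho\lambda^n}$, with $\sup_n \int \abs{\nabla \phi_n} \rho\, d\lambda^n < \infty$, and with $\rho \partial_i \phi_n \lambda^n \rightharpoonup \rho D_i \phi$ weakly-$*$ against $C_b\pa{\erre^n}$. Setting $f_n := \phi_n \circ g \in \cD^{1,\infty}$, the classical chain rule gives $\nabla f_n = \sum_{i=1}^n \partial_i\phi_n\pa{g}\, k_i$, and conditioning on $g$ together with integration by parts yields
\[ -\E\qua{f_n \partial_h^* u} = \E\qua{u \ang{h', \nabla f_n}_2} = \sum_{i=1}^n \int_{\erre^n} v\pa{x} \ang{k_i, h'}_2 \partial_i \phi_n\pa{x} \rho\pa{x}\, d\lambda^n\pa{x}. \]
Since $f_n \to f$ in $L^1\pa{\Prob}$ (the law of $g$ is $\rho \lambda^n$) and $\sup_n \E\abs{\nabla f_n}_2 = \sup_n \int \abs{\nabla\phi_n}\rho\, d\lambda^n < \infty$, Theorem \ref{theorem-ambrosio-fukushima} provides $f \in BV$. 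The Orlicz-type inequality \eqref{equation-holder-orlicz}, applied to $\pa{f_n} \subset L\log^{1/2}L$ (uniformly bounded by Remark \ref{remark-embedding-d1-llog}), then lets one pass to the limit on the left, identifying it with $-\E\qua{f\partial_h^* u} = -\int u\, dD_h f$ via the defining $BV$ identity, while weak-$*$ convergence together with the continuity and boundedness of $v$ makes the right-hand side converge to the claimed integral against $D_i\phi$.

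The main obstacle is the passage to the limit on the right-hand side: because $\abs{D_i \phi}$ may be singular with respect to $\lambda^n$, the integrand must be genuinely continuous pointwise, which is exactly why the continuous representative $v$ built in the first step is indispensable---any merely $\lambda^n$-a.e.\ defined version would be invisible to the singular part of $D_i\phi$. Obtaining the Gaussian-weighted approximation $\rho \partial_i\phi_n \lambda^n \rightharpoonup \rho D_i \phi$ against $C_b\pa{\erre^n}$ (rather than against compactly supported test functions) is also delicate and requires a truncation argument exploiting the finiteness of $\rho\abs{D\phi}$ to control the tails.
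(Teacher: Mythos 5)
The paper states this theorem without proof (``we omit the proof''), giving only the formal derivation that precedes it; your plan is essentially the natural completion of that sketch, in the same spirit as the proofs of Theorems~\ref{theorem-chain-rule-1-level-sets} and~\ref{theorem-chain-rule-2-phi}. The Gaussian-regression construction of the bounded continuous representative $v\pa{x}=\E\qua{\psi\pa{Ax+Z}}$ is correct and is indeed the ingredient that makes the right-hand side meaningful against a singular $D_i\phi$; you also correctly identify why weak-$*$ convergence against $C_b\pa{\erre^n}$, not just $C_c\pa{\erre^n}$, is needed.

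There is, however, a step that fails as written: the passage to the limit on the left-hand side. You justify $\E\qua{f_n\partial_h^*u}\to\E\qua{f\partial_h^*u}$ by $L^1$ convergence together with uniform boundedness of $\pa{f_n}$ in $L\log^{1/2}L$ and inequality \eqref{equation-holder-orlicz}. That inequality would require $\norm{f_n-f}_{L\log^{1/2}L}\to0$, and boundedness does not provide it: for the term $\E\qua{\pa{f_n-f}\,u\,W\pa{h'}}$ one can have $g_n\to0$ in $L^1\pa{\Prob}$ with $\sup_n\E\qua{A_{1/2}\pa{g_n}}<\infty$ and yet $\E\qua{g_nY}$ bounded away from $0$ for a fixed Gaussian $Y$ (take $g_n=e^{a_n^2/2}I_{\gra{Y>a_n}}$ with $a_n\to\infty$; then $\E\qua{\abs{g_n}}\sim 1/a_n$ and $\E\qua{A_{1/2}\pa{g_n}}$ stays bounded, while $\E\qua{g_nY}=\pa{2\pi}^{-1/2}$). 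The standard repair is to truncate $\phi$ in its range first: for $\phi^R=\pa{-R}\lor\phi\land R$ one has $\abs{D\phi^R}\le\abs{D\phi}$, the mollifications of $\phi^R$ may be taken uniformly bounded so that $f_n\to\phi^R\circ g$ in every $L^p\pa{\Prob}$ and the Gaussian factor causes no trouble; once the formula holds for each $\phi^R$ and $f$ is known to be $BV$ (hence in $L\log^{1/2}L$), let $R\to\infty$ by dominated convergence on both sides. Relatedly, the existence of your sequence $\pa{\phi_n}$ is only asserted: plain mollification gives $\int\abs{\nabla\phi_n}\rho\,d\lambda^n\le\int\pa{\rho*\eta_n}\,d\abs{D\phi}$, which need not be finite, let alone bounded, when $\abs{D\phi}$ has infinite total mass and only $\rho.\abs{D\phi}$ is finite; the truncation argument you mention in passing is where the hypothesis that $\rho.\abs{D\phi}$ is finite actually enters, and it should be carried out explicitly.
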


By a density argument, the formula above identifies the measure $Df$ as an expression of $D\phi$ and $\nabla W\pa{k_i} = k_i$.

We turn now our attention to some chain rule formulas valid with less regularity assumptions. In what follows, we consider the case $n=1$ only, but we do not limit ourselves to a Wiener integral: more precisely, we assume that $g$ belongs to $\cD^{1,1}$ and moreover that its law is absolutely continuous with respect to $\lambda$, with a locally bounded density $\rho$. 

Given an integrable random variable $X$, we write
\[ x \mapsto \E\qua{ X | g = x} \virgola\]
for the equivalence class in $L^1\pa{\rho.\lambda}$ of any Borel function $h: \erre \to \erre$ such that $h\circ g = P\qua{X \,|\, g}$.

We introduce the following notation:
\[\xi\pa{x} = \rho\pa{x} \E\qua{  \abs{\nabla g}_2 | g = x} \virgola \]
which defines an element of $L^{1}\pa{\lambda}$. We assume that $\xi$ is locally bounded and write $\xi_*\pa{x} = \liminf_{n\to \infty} n\int_{x}^{x+1/n} \xi\pa{t}dt$.

\begin{theorem}\label{theorem-chain-rule-1-level-sets}
For every real $x$, the level set indicator function $f^x = I_{]x,\infty[}\pa{g} = I_{ \gra{g > x} }$ is $BV$ and $\abs{Df^x}_2\pa{\Omega} \le \xi_*\pa{x}$.

Let $u$ be a smooth cylindrical function and $h \in H^1_0$. Then, for $\lambda$ a.e.\ $x$,
\[  \int u D_h f^x = \rho\pa{x} \E\qua{ u \ang{\nabla g, h'}_2 | g = x }\punto \]
\end{theorem}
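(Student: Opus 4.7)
The plan is to approximate $f^x$ inside $\cD^{1,1}$ by step-like functions of $g$. Fix $\epsilon>0$ and let $\phi_\epsilon:\erre\to[0,1]$ be the piecewise linear function that vanishes on $(-\infty,x]$, equals $1$ on $[x+\epsilon,\infty)$ and interpolates linearly between, so that $\phi_\epsilon'=\epsilon^{-1}I_{]x,x+\epsilon[}$ $\lambda$-a.e.\ (one may mollify to obtain $C^1_b$ approximants without affecting anything below). By the chain rule $\phi_\epsilon\pa{g}\in\cD^{1,1}$ with $\nabla\phi_\epsilon\pa{g}=\phi_\epsilon'\pa{g}\nabla g$, while the absolute continuity of the law of $g$ gives $\phi_\epsilon\pa{g}\to f^x$ pointwise $\Prob$ a.s., whence in $L^1\pa{\Prob}$ by bounded convergence.

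The total variation bound is then an instance of Theorem \ref{theorem-ambrosio-fukushima}. Conditioning on $g$ and using $\phi_\epsilon'\ge 0$,
\[ \E\qua{\abs{\nabla\phi_\epsilon\pa{g}}_2}=\E\qua{\phi_\epsilon'\pa{g}\abs{\nabla g}_2}=\int_\erre\phi_\epsilon'\pa{y}\xi\pa{y}\,dy=\frac{1}{\epsilon}\int_x^{x+\epsilon}\xi\pa{y}\,dy, \]
so along $\epsilon_n=1/n$ the liminf of the left-hand side is by definition $\xi_*\pa{x}$, yielding $\abs{Df^x}\pa{\Omega}\le\xi_*\pa{x}$.

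For the integral representation, fix $u$ smooth cylindrical and $h\in H_0^1$. Combining the $BV$ integration-by-parts formula for $f^x$ with its $\cD^{1,1}$ analogue for $\phi_\epsilon\pa{g}$,
\[ \int u\,dD_h f^x=-\E\qua{f^x\,\partial_h^* u}=-\lim_{\epsilon\downarrow 0}\E\qua{\phi_\epsilon\pa{g}\,\partial_h^* u}=\lim_{\epsilon\downarrow 0}\E\qua{u\,\phi_\epsilon'\pa{g}\ang{\nabla g,h'}_2}, \]
where the second equality holds by dominated convergence ($\phi_\epsilon\pa{g}$ are bounded by $1$ and converge pointwise, while $\partial_h^* u=\partial_h u-uW\pa{h'}$ lies in every $L^p\pa{\Prob}$). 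Setting $\Psi\pa{y}:=\rho\pa{y}\E\qua{u\ang{\nabla g,h'}_2\mid g=y}\in L^1\pa{\lambda}$ (with $L^1$ norm bounded by $\norm{u}_\infty\norm{h'}_2\E\qua{\abs{\nabla g}_2}$), the same conditioning on $g$ gives
\[ \E\qua{u\,\phi_\epsilon'\pa{g}\ang{\nabla g,h'}_2}=\frac{1}{\epsilon}\int_x^{x+\epsilon}\Psi\pa{y}\,dy, \]
and the Lebesgue differentiation theorem identifies the limit with $\Psi\pa{x}$ for $\lambda$ a.e.\ $x$, which is the claimed formula.

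The principal obstacle is reconciling the two distinct uses of $\epsilon\downarrow 0$: the first supplies the liminf in Theorem \ref{theorem-ambrosio-fukushima} (giving only the global bound $\abs{Df^x}\pa{\Omega}\le\xi_*\pa{x}$), while the second identifies $D_h f^x$ at the specific level $x$ through a Lebesgue-point argument on $\Psi$. Both limits reduce, via the same disintegration along $g$, to averages of locally integrable real functions, and both rely on the local boundedness of $\xi$ and $\rho$ to guarantee that nothing pathological happens at $x$; the $\lambda$-a.e.\ qualification in the conclusion is sharp, corresponding precisely to the Lebesgue points of $\Psi$ (which a priori depend on the chosen $u$ and $h$).
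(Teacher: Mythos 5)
Your proposal is correct and follows essentially the same route as the paper: the authors use exactly the piecewise linear ramps $\phi^x_n(t)=(n(t-x)\wedge 1)\vee 0$ (your $\phi_\epsilon$ with $\epsilon=1/n$), apply the chain rule and conditioning on $g$ to bound $\E\qua{\abs{\nabla f^x_n}_2}=n\int_x^{x+1/n}\xi\,d\lambda$, invoke Theorem \ref{theorem-ambrosio-fukushima} for the $BV$ property and the bound by $\xi_*\pa{x}$, and then pass to the limit in the integration-by-parts identity via the Lebesgue differentiation theorem for $\lambda$ a.e.\ $x$. Your closing remarks on the $\lambda$-a.e.\ qualification and the role of local boundedness of $\xi$ match the paper's treatment.
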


\begin{remark}
The left member in the identity above is a continuous function of $x$, since it coincides with $-\E\qua{f^x \partial^*_h u }$ and the law of $g$ has no atoms. Therefore, the right member above admits a continuous representative.
\end{remark}

\begin{proof}
For $x \in \reals$, for $n\ge 1$, we define
\[ \phi^x_n \pa{t} = \pa{n(t-x) \land 1} \lor 0 \virgola\]
and write $f^x_n = \phi^x_n \circ g$. The sequence $\pa{f^x_n}_{n\ge1}$ converges to $f^x$ in every $L^p\pa{\Prob}$, for $p\ge 1$. By the classical chain rule, $f^x_n$ admits a Malliavin derivative given by
\[ \nabla f^x_n = n I_{]x, x+1/n[}\pa{g} \nabla g \punto\]
Then, conditioning with respect to $g$,
\[ \E\qua{\abs{ \nabla f^x_n  }_2 } = n \int_x^{x+1/n} \xi\pa{t} dt \virgola\]
which is bounded for $n \to \infty $. By Theorem \ref{theorem-ambrosio-fukushima}, the first statement is proved.

Given $u$ and $h$ as in the second statement, we write the integration-by-parts identity for every $f^x_n$:
\[ - \E\qua{ \partial^*_h u f^x_n} = \E\qua{ u \ang{\nabla f^x_n, h'}_2 }  \punto\]
We take the conditional expectation with respect to $g$ and integrate with respect to its law,
\[ - \E\qua{ \partial^*_h u f^x_n} = n \int_x^{x+1/n}\rho \pa{t} \E\qua{u \ang{\nabla g, h'}_2 | g = t } dt \punto\]
As $n \to  \infty$, the right member above converges to $\rho \pa{x} \E\qua{u \ang{\nabla g, h'}_2 | g = x }$, for $\lambda$ a.e.\ $x$, while the left member converges to $-\E\qua{ \partial^*_h u f^x}$, for every $x$.
\end{proof}

Once the case of level sets is settled, we consider a $\phi \in BV_{loc}\pa{\lambda}$ such that and $\abs{D\phi}$ has compact support, contained in an interval $[-M, M]$, for some $M>0$. In particular, such a $\phi$ is $\lambda$ a.e.\ constant for $\abs{x}$ large (possibly with two different values according to the sign of $x$). Therefore, $\phi$ differs from a Euclidean $BV$ function with compact support contained in the same interval $[-M, M]$, by adding a suitable constant function and a multiple of an indicator function of an unbounded interval. We write $C_M$ for a real number such that $\xi\pa{x} \le C_M$ for $\lambda$ a.e.\ $x \in [-M, M]$.

\begin{theorem}\label{theorem-chain-rule-2-phi}
With the notation above, $f = \phi \circ g$ is $BV$, with $\abs{Df}\pa{\Omega} \le C_M \abs{D\phi}\pa{\erre}$.

Let $u$ be a smooth cylindrical function and $h \in H^1_0$. Then,
\begin{equation} \label{equation-bv-chain-rule} \int u D_h f = \int_\erre \rho\pa{x} \E\qua{ u \ang{\nabla g, h'}_2 | g = x } D\phi\pa{dx} \end{equation}
where the integrand on the right is intended as its continuous representative.
\end{theorem}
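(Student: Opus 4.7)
The strategy is to combine the approximation characterization of $BV$ functions (Theorem \ref{theorem-ambrosio-fukushima}) with Euclidean mollification of $\phi$, and then to derive the chain rule formula by passing to the limit in the classical identity for the smooth approximants.

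I would first mollify $\phi$: after subtracting a constant so that $\phi = 0$ on $]-\infty, -M]$ $\lambda$-a.e.\ (which affects neither $Df$ nor $D\phi$), $\phi$ is uniformly bounded on $\erre$ by $\abs{D\phi}\pa{\erre}$. Set $\phi_\epsilon = \phi * \eta_\epsilon$ for a nonnegative symmetric mollifier $\eta_\epsilon$ supported in $[-\epsilon, \epsilon]$: each $\phi_\epsilon$ is smooth, bounded uniformly in $\epsilon$, $\phi_\epsilon \to \phi$ in $L^1_{\mathrm{loc}}$, and $\abs{\phi_\epsilon'} \le \abs{D\phi} * \eta_\epsilon$ pointwise. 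By the classical chain rule, $f_\epsilon := \phi_\epsilon\pa{g} \in \cD^{1,1}$ with $\nabla f_\epsilon = \phi_\epsilon'\pa{g}\nabla g$, and conditioning on $g$ gives
\[ \E\qua{\abs{\nabla f_\epsilon}_2} = \int \abs{\phi_\epsilon'\pa{x}}\xi\pa{x}\,dx \le \int \pa{\xi * \eta_\epsilon}\pa{y}\,\abs{D\phi}\pa{dy}, \]
which local boundedness of $\xi$ and the support of $\abs{D\phi}$ bound uniformly in $\epsilon$, with $\liminf_\epsilon$ at most $C_M\abs{D\phi}\pa{\erre}$. Since $\phi$ is bounded and $\phi_\epsilon \to \phi$ $\lambda$-a.e.\ along a subsequence, absolute continuity of the law of $g$ yields $f_\epsilon \to f$ in every $L^p\pa{\Prob}$. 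Theorem \ref{theorem-ambrosio-fukushima} then gives that $f$ is $BV$ with $\abs{Df}\pa{\Omega} \le C_M \abs{D\phi}\pa{\erre}$.

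For the formula, define $\tilde\Psi\pa{x} := -\E\qua{f^x \partial^*_h u}$: by the remark following Theorem \ref{theorem-chain-rule-1-level-sets}, $\tilde\Psi$ is a continuous representative of $x \mapsto \rho\pa{x}\E\qua{u\,\ang{\nabla g, h'}_2 \,|\, g = x}$, and since $f^x \to 1$ (resp.\ $0$) as $x \to -\infty$ (resp.\ $+\infty$) and $\partial^*_h u \in \bigcap_p L^p\pa{\Prob}$, $\tilde\Psi$ is bounded and vanishes at $\pm \infty$. Applying the classical chain rule to $f_\epsilon$, conditioning on $g$, and using $\phi_\epsilon' = \pa{D\phi}*\eta_\epsilon$ with symmetry of $\eta_\epsilon$ in a Fubini computation, one obtains
\[ -\E\qua{f_\epsilon \partial^*_h u} = \int \tilde\Psi\pa{x}\phi_\epsilon'\pa{x}\,dx = \int \pa{\tilde\Psi * \eta_\epsilon}\pa{y} \, D\phi\pa{dy}. \]
As $\epsilon \to 0$, the left-hand side tends to $-\E\qua{f \partial^*_h u} = \int u\, D_h f$ by $L^p$-convergence of $f_\epsilon$ to $f$, while the right-hand side tends to $\int \tilde\Psi \, D\phi\pa{dy}$ by uniform convergence of $\tilde\Psi * \eta_\epsilon$ to $\tilde\Psi$ on $\erre$ (as $\tilde\Psi$ lies in $C_0\pa{\erre}$) and finiteness of $D\phi$.

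The principal delicate point is the use of the continuous representative $\tilde\Psi$: the right-hand side of the desired formula involves integrating, against the possibly singular measure $D\phi$, a conditional expectation which is defined only $\lambda$-a.e., and this is made meaningful precisely by the previous theorem on level sets. A secondary subtlety is matching the sharp constant $C_M$ in the variation bound, since mollification slightly enlarges the support of $D\phi$ by $\epsilon$: this is handled by a diagonal argument exploiting $\pa{\xi * \eta_\epsilon}\pa{y} \to \xi\pa{y}$ at every Lebesgue point of $\xi$, together with the local boundedness of $\xi$, so that only a $|D\phi|$-negligible perturbation arises in the limit.
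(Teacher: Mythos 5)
Your proposal is correct and follows essentially the same route as the paper: approximate $\phi$ by smooth compactly supported functions (the paper invokes the Euclidean analogue of Theorem \ref{theorem-ambrosio-fukushima}, of which your mollification is a concrete instance), apply the classical chain rule to $f_\epsilon=\phi_\epsilon\circ g$, condition on $g$ to bound $\E\qua{\abs{\nabla f_\epsilon}_2}$, conclude via Theorem \ref{theorem-ambrosio-fukushima}, and pass to the limit in the integration-by-parts identity against the continuous representative $\tilde\Psi\pa{x}=-\E\qua{f^x\partial_h^*u}$; your limit passage via $\tilde\Psi\in C_0\pa{\erre}$ and uniform convergence of $\tilde\Psi*\eta_\epsilon$ is in fact more explicit than the paper's appeal to weak convergence of $D\phi_n$ in duality with bounded continuous functions. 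One small caveat: in your final remark, the convergence $\pa{\xi*\eta_\epsilon}\pa{y}\to\xi\pa{y}$ holds at $\lambda$-a.e.\ $y$, but $\abs{D\phi}$ may be singular with respect to $\lambda$, so ``only a $\abs{D\phi}$-negligible perturbation'' is not justified as stated; the constant is recovered more simply by bounding $\pa{\xi*\eta_\epsilon}\pa{y}$ by the essential supremum of $\xi$ on an $\epsilon$-neighbourhood of $\qua{-M,M}$, which is the same implicit reading of $C_M$ that the paper's own estimate uses.
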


\begin{proof}
We can suppose that $\phi \in BV\pa{\lambda}$ has compact support. By the Euclidean analogue of Theorem \ref{theorem-ambrosio-fukushima} (see \cite{ambfus00}, Theorem 3.9) there exists a sequence of smooth functions with compact support $\pa{\phi_n}_{n\ge1}$, convergent to $\phi$ in $L^1\pa{\lambda}$, such that the sequence of derivatives $\pa{\phi'_n } _{n\ge1}$ is bounded in $L^1\pa{\lambda}$. We define $f_n = \phi_n \circ g$, so that, for some $C>0$,
\[ \E\qua{\abs{f_n - f} } = \int \abs{\phi_n -\phi} \rho d\lambda \le C \norm{\phi_n - \phi }_{L^1} \punto \]
By the chain rule for differentiable functions, for every $n\ge 1$,
\[ \nabla f_n = \phi'_n\pa{g} \cdot \nabla g \punto\]
Taking expectations and conditioning with respect to $g$,
\[ \E\qua{\abs{\nabla f_n}_2} = \E\qua{ \abs{\phi'_n\pa{g}} \E \qua{\abs{\nabla g}_2 \,|\, g } } \punto\]
Integrating with respect to the law of $g$, the right member above is equal to
\[ \int_\erre  \abs{\phi'_n\pa{x}}\xi\pa{x} dx   \le C_M  \int_\erre  \abs{\phi'_n\pa{x}}dx\virgola \]
and we conclude that $f$ is $BV$.

To obtain the chain rule \ref{equation-bv-chain-rule}, we can suppose that the sequence of measure derivatives $\pa{D\phi_n}_{n\ge1}$ (where $D\phi_n = \phi'_n. \lambda$) converges in the duality with bounded continuous functions. It is not hard to see that equation \ref{equation-bv-chain-rule} holds true with $\phi_n$ instead of $\phi$ and therefore holds true in the limit, if the integrand is intended as its continuous representative.
\end{proof}

\begin{remark}
If $\xi$ admits a continuous representative, we obtain the stronger estimate $\abs{Df}\pa{\Omega} \le \int_{\erre} \xi\pa{x} \abs{D\phi}\pa{dx}$.
\end{remark}

In the last section we will be given a Borel function $U\pa{x,r}$, such that, for some $s\in [0,T]$ and $\lambda$ a.e.\ $r \ge s$, $U\pa{x,r}$ is a version of $\rho\pa{x} \E\qua{ u \partial_rg | g = x }$, continuous in $x$. In order to obtain a chain rule in terms of $U$, we make some additional boundedness assumptions. 

\begin{theorem}\label{theorem-chain-rule-4-disintegrated-useful}
Assume that, for every real $x$, $r \mapsto U\pa{x,r}$ belongs to $L^2\pa{0,T}$, and the function $x \mapsto \abs{U\pa{x,\cdot}}_2$ is locally bounded. Then, for $\lambda$ a.e.\ $r \ge s$,
\begin{equation}\label{equation:bv:disintegrated} \int u D_r f  = \int_\erre U \pa{x,r} D\phi\pa{x}\punto \end{equation}
\end{theorem}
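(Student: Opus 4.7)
The strategy is to apply Theorem \ref{theorem-chain-rule-2-phi} with the $h \in H^1_0$ satisfying $h' = I_{]s, t]}$, for arbitrary $t \in [s, T]$, and then differentiate the resulting identity in $t$.

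First I would rewrite the left-hand side of the formula in Theorem \ref{theorem-chain-rule-2-phi} via the product-space representation of $Df$. By the polar decomposition of $Df$, the real measure $D_h f = \ang{Df, h'}_2$ has density $\int_s^t \sigma\pa{\omega, r}\, dr$ with respect to $\abs{Df}$, where $\sigma$ is a version of the density of $Df$ against $\abs{Df} \otimes \lambda$; Fubini then yields
\[ \int u\, dD_h f = \int_s^t \int u\, dD_r f\, dr \punto \]

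Next I would identify the right-hand side. Expanding $\ang{\nabla g, h'}_2 = \int_s^t \partial_r g\, dr$ and applying conditional Fubini (justified because $u$ is bounded and $\int_0^T \E\qua{\abs{\partial_r g}}\, dr \le T^{1/2} \E\qua{\abs{\nabla g}_2}$, by Lemma \ref{lemma-measure-l2-measure-product}), one sees that $x \mapsto \rho\pa{x}\E\qua{u \ang{\nabla g, h'}_2 \,|\, g = x}$ coincides $\lambda$ a.e.\ with $\int_s^t U\pa{\cdot, r}\, dr$. The main obstacle is that Theorem \ref{theorem-chain-rule-2-phi} refers to the \emph{continuous} representative, whereas $\int_s^t U\pa{\cdot, r}\, dr$ is a priori only measurable. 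I would fix this by proving it is continuous in $x$: for $x_n \to x$ the hypothesis gives $U\pa{x_n, r} \to U\pa{x, r}$ for $\lambda$ a.e.\ $r$, while the local $L^2$-boundedness makes $\gra{U\pa{x_n, \cdot}}_n$ bounded in $L^2\pa{0,T}$, hence uniformly integrable on $[s, t]$, so Vitali's theorem gives $\int_s^t U\pa{x_n, r}\, dr \to \int_s^t U\pa{x, r}\, dr$. Two continuous functions agreeing $\lambda$ a.e.\ coincide everywhere, so the continuous representative equals $\int_s^t U\pa{\cdot, r}\, dr$ on all of $\erre$.

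Combining the two steps and applying one last Fubini (justified because $x \mapsto \abs{U\pa{x, \cdot}}_2$ is locally bounded and $\abs{D\phi}$ has compact support, so $\pa{x, r}\mapsto U\pa{x, r}$ is $\abs{D\phi}\otimes \lambda$-integrable on $[-M, M] \times [s, t]$ by Cauchy-Schwarz), one obtains
\[ \int_s^t \int u\, dD_r f\, dr = \int_s^t \int_\erre U\pa{x, r}\, D\phi\pa{dx}\, dr \]
for every $t \in [s, T]$. Since both sides are absolutely continuous functions of $t$, Lebesgue differentiation yields that the integrands coincide for $\lambda$ a.e.\ $r \in [s, T]$, which is the desired identity.
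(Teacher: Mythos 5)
Your argument is correct, but it takes a genuinely different route from the paper's. The paper does not invoke Theorem \ref{theorem-chain-rule-2-phi} as a black box: it re-runs the whole approximation scheme at the disintegrated level, first settling indicator functions of level sets, then taking smooth $\phi_n$ with $D\phi_n \rightharpoonup D\phi$ and showing that $r \mapsto \int_\erre U\pa{x,r}D\phi_n\pa{dx}$ is bounded in $L^2\pa{s,T}$ and converges $\lambda$-a.e., hence weakly, to $r \mapsto \int_\erre U\pa{x,r}D\phi\pa{dx}$; testing against an arbitrary $h'$ supported in $[s,T]$ then yields the a.e.\ identity. You instead take the already-proved identity \eqref{equation-bv-chain-rule}, specialize to $h' = I_{]s,t]}$, identify the continuous representative of the right-hand integrand as $\int_s^t U\pa{\cdot,r}\,dr$, and disintegrate in $t$ by Lebesgue differentiation. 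Your Vitali/uniform-integrability step is the same analytic fact the paper isolates at the start of its proof (a bounded, $\lambda$-a.e.\ convergent sequence in $L^2\pa{0,T}$ converges weakly), and the remaining justifications --- conditional Fubini for $\ang{\nabla g,h'}_2 = \int_s^t \partial_r g\,dr$, everywhere equality of continuous functions agreeing $\lambda$-a.e., Cauchy--Schwarz on $[-M,M]\times[s,t]$ --- are all in order. What your approach buys is economy: the Euclidean approximation $\phi_n \to \phi$ is used only once, inside Theorem \ref{theorem-chain-rule-2-phi}, rather than being repeated; what the paper's approach buys is that it never needs to exhibit the continuous representative for a general $\phi$ and a general $h$ --- it only needs the level-set identity $\int_0^T U\pa{x,r}h'\pa{r}\,dr = \rho\pa{x}\E\qua{u\ang{\nabla g,h'}_2\,|\,g=x}$ for $\lambda$-a.e.\ $x$, and then works with weak limits of the measures $D\phi_n$ directly.
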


\begin{proof}

The following elementary fact will be useful. Let $\pa{k_n}_{n\ge1}$ be a bounded sequence in $L^2\pa{0,T}$ convergent $\lambda$ a.e.\ to some function $k$. Then $k \in L^2\pa{0,T}$ and the sequence converges weakly to $k$.

Because of the assumptions and the fact stated above, for every $h\in H^1_0$ with $h\pa{r} = 0$ if $r<s$, $\int_0^T U\pa{x,r} h'\pa{r} dr$, is well defined and continuous as a function of $x$. Moreover, if $\psi \in C_c\pa{\erre}$, then 
\[ \int_\erre \psi \pa{x} \int_0^T U\pa{x,r} h'\pa{r} dr dx = \int_0^T \E\qua{ u \pa{\psi \circ g }\partial_r g } h'\pa{r} dr \virgola \]
since for every $r\ge s$, $U\pa{x,r}$ is a version of $\rho\pa{x} \E\qua{ u \partial_rg | g = x }$. We can exchange integration and expectation above, because $u\pa{\psi \circ g}$ is bounded, and we conclude that $\int_0^T U\pa{x,r} h'\pa{r} dr = \rho\pa{x} \E\qua{ u \ang{ \nabla g, h'}_2 | g = x }$ for $\lambda$ a.e.\ $x$. This suffices to settle the case of indicator functions of level sets.

For the case of a function $\phi$, which we suppose of compact support, we consider again an approximation with smooth functions $\pa{\phi_n}_{n\ge1}$ with derivatives of compact support, such that $\pa{ D\phi_n}_{n\ge1}$ weakly converges to $D\phi$. For every $n$, and every $h\in H^1_0$, with $h'\pa{r} =0$ if $r<s$,
\[ \int_0^T h'\pa{r} \int_\erre U\pa{x,r} D\phi_n\pa{x} dr = \int_\erre \rho\pa{x} \E\qua{ u  \ang{ \nabla g, h'}_2| g = x } D\phi_n\pa{x} \virgola\]
Then, for $\lambda$ a.e.\ $r\ge s$, $\int_\erre \rho\pa{x} \E\qua{ u \partial_r g | g = x } D\phi_n\pa{x}$ tends to the left member in equation \eqref{equation:bv:disintegrated}, since the integrand is continuous. Moreover, the sequence is bounded in $L^2\pa{s,T}$, since the integrand is locally bounded,
\[ \abs{\int_\erre \rho\pa{x} \E\qua{ u \partial_r g | g = x } D\phi_n\pa{x}}_2 \le C  \abs{D\phi_n}\pa{\erre} \punto \]
As $n \to \infty$, the fact stated at the beginning of the proof applies again and we conclude that
\[ \int_0^T h'\pa{r} \qua{ \int_\erre U\pa{x,r} D\phi\pa{x}}  dr = \int u D_h f  = \int_0^T h'\pa{r} \qua{\int u D_r f} dr\punto \]
\end{proof}

\section{Ocone-Karatzas formula for $BV$ functions}\label{section-4}

\subsection{Predictable processes and projections}

Given a filtration $\mathcal{G} = \pa{\mathcal{G}_t}_{0\le t \le T}$, a $\mathcal{G}$-predictable rectangle is a subset of the product space $\Omega \times [0,T]$, of the form
\[ A \times ]s,t] \virgola\]
where $A \in \mathcal{G}_s$. The $\mathcal{G}$-predictable $\sigma$-algebra of sets $\mathcal{P}_{\mathcal{G}}$ is, by definition, generated by the family of all $\mathcal{G}$-predictable rectangles. A stochastic process is said $\mathcal{G}$-predictable if it is measurable with respect to $\mathcal{P}_{\mathcal{G}}$.

Usually, one takes $\mathcal{G} = \mathcal{F}^\Prob$, the natural filtration of the Wiener process, completed with all the $\Prob$-neglegibile Borel sets, thus satisfying the so-called usual conditions, and simply speaks of predictable rectangles, predictable $\sigma$-algebra ($\mathcal{P}^\Prob$) and predictable processes. Since we are dealing with measures, in general, not absolutely continuous with respect to $\Prob$, we consider the case when $\mathcal{G} = \mathcal{F}$ is just the natural filtration of the Wiener process, and speak of \emph{strictly} predictable rectangles, \emph{strictly} predictable $\sigma$-algebra ($\mathcal{P}$) and \emph{strictly} predictable processes, for the correspondent $\mathcal{F}$-predictable objects. 

By a monotone class argument, every predictable process coincides $\Prob\otimes \lambda$ almost everywhere with a strictly predictable process.

A smooth process is a finite linear combination of strictly predictable processes of the form
\[ f I_{]s,t]}\virgola \]
where $f$  is smooth. The strict predictability implies that $f$ can be written as $\phi\pa{\Delta_J W}$ for some $\Delta_J W$, as introduced in \eqref{equation-delta-W}, such that $s_n \le s$. By a direct approximation of cylindrical functions with smooth functions and a monotone class argument, smooth processes are dense in every $L^p\pa{\Omega\times [0,T], \mathcal{P}, \mu}$, for $1\le p < \infty$, where $\mu$ is any positive finite measure on $\mathcal{P}$. Therefore, if we are given two real measures $\mu$, $\nu$ on $\mathcal{P}$ such that for every smooth process $F$, $\int F d\mu = \int F d\nu$, then $\mu = \nu$.

The classical Ocone-Karatzas formula contains the stochastic integral of a predictable process $H = \pa{H_s}_{0\le s \le T}$, which can be correctly defined as the density of the measure $\nabla f. \pa{\Prob\otimes \lambda}$ restricted to the strictly predictable $\sigma$-algebra $\mathcal{P}$, with respect to $\Prob\otimes \lambda$. The process $H$ is also identified by the condition $H_s = \E\qua{\partial_s f| \mathcal{F}_s}$, for $\lambda$ a.e.\ $s \in [0,T]$. We can see $H_s$ as the density of $\partial_s f. \Prob$, restricted to $\mathcal{F}_s$, with respect to $\Prob$ (also restricted).

Given a $BV$ function $f$, Theorem \ref{theorem-clark-ocone-bv} will prove that its measure derivative $Df$, as a measure on the product space, when restricted to $\mathcal{P}$, becomes absolutely continuous with respect to $P\otimes \lambda$, with a strictly predictable density $H = \pa{H_s}_{0\le s \le T}$. The next lemma shows that we can still see $H_s$ as the density of the measure $D_sf$, restricted to $\mathcal{F}_s$, with respect to $\Prob$. We state it for a general real measure $\mu$ on the product space $\pa{\Omega \times [0,T], \mathcal{A}\otimes \mathcal{B}\pa{0,T}}$ of the form $\mu = \sigma . \pa{\nu \otimes \lambda}$, where $\nu$ is a positive finite measure on $\pa{\Omega, \mathcal{A}}$.

\begin{proposition} \label{proposition-predictable-projection}
With the notation above, if $\mu_{|\mathcal{P}}$ is absolutely continuous with respect to $\pa{P\otimes \lambda}_{|\mathcal{P}}$, with a strictly predictable density $H = \pa{H_s}_{0\le s \le T}$, then, for $\lambda$ a.e. $s\in [0,T]$,
\[  \pa{\sigma\pa{s} .\nu}_{|\mathcal{F}_s} = H_s. P_{|\mathcal{F}_s} \punto \]
\end{proposition}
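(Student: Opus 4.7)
The plan is to test the identity $\mu_{|\mathcal{P}} = H\cdot(\Prob\otimes\lambda)_{|\mathcal{P}}$ against strictly predictable smooth processes of the form $F = g\,I_{]a,b]}$, where $g = \phi(\Delta_J W)$ is a smooth function whose time-points are all at most $a$ (so that $g$ is $\mathcal{F}_a$-measurable), and then differentiate in the upper endpoint $b$ to extract a pointwise-in-$r$ statement. Since the hypothesis gives $\int F\,d\mu = \int FH\,d(\Prob\otimes\lambda)$ for every bounded strictly predictable $F$, Fubini yields
\[
\int_a^b \int_\Omega g(\omega)\,\sigma(\omega,r)\,\nu(d\omega)\,dr \;=\; \int_a^b \int_\Omega g(\omega)\,H_r(\omega)\,\Prob(d\omega)\,dr.
\]

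To promote this to a pointwise equality at $\lambda$-a.e.\ $r$ for a large class of test functions simultaneously, I fix once and for all a countable family $\{(g_n, a_n)\}_{n \ge 1}$, where each $g_n$ is a rational-coefficient smooth function of $\Delta_{J_n}W$ and $a_n \in \mathbb{Q}\cap[0,T]$ dominates the time-points of $J_n$. Letting $b$ vary over $\mathbb{Q}\cap[a_n,T]$ and applying Lebesgue differentiation, for each $n$ there is a $\lambda$-null set $N_n \subset [a_n, T]$ outside which
\[
\int_\Omega g_n(\omega)\,\sigma(\omega,r)\,\nu(d\omega) \;=\; \int_\Omega g_n(\omega)\,H_r(\omega)\,\Prob(d\omega).
\]
Setting $N = \bigcup_n N_n$, which is $\lambda$-null, the identity holds for every $r\notin N$ and every index $n$ with $a_n \le r$.

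For such $r$ and any rational $s < r$, the family $\{g_n : a_n \le s\}$ is $L^1$-dense in the bounded $\mathcal{F}_s$-measurable functions with respect to both $|\sigma(r)|.\nu$ and $|H_r|.\Prob$, by the density of smooth cylindrical functions in $L^1$ of any finite measure recalled in Section~\ref{section-2}. The identity then extends to all bounded $\mathcal{F}_s$-measurable functions, and by taking indicators one obtains $(\sigma(r).\nu)(A) = (H_r.\Prob)(A)$ for every $A\in\mathcal{F}_s$. Continuity of the Wiener paths implies $\mathcal{F}_r = \bigvee_{s < r,\,s\in\mathbb{Q}}\mathcal{F}_s$, and a monotone-class argument applied to the signed measure $\sigma(r).\nu - H_r.\Prob$ on the $\pi$-system $\bigcup_{s<r,\,s\in\mathbb{Q}}\mathcal{F}_s$ finally promotes the equality to all of $\mathcal{F}_r$.

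The main obstacle is organizational rather than analytical: one must ensure that the $\lambda$-exceptional set in $r$ does not depend on the test set $A\in\mathcal{F}_r$, and this forces the introduction of a countable dense family of test processes before invoking Lebesgue differentiation. A secondary subtlety is that $\sigma(r).\nu$ and $H_r.\Prob$ are a priori signed measures, so the closing monotone-class step must be the uniqueness statement for finite signed measures; the finiteness of $\nu$ and $\Prob$ and the Fubini-integrability of $\sigma$ and $H$ keep this harmless.
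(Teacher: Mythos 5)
Your proof is correct and follows essentially the same route as the paper's: both reduce to a countable family of cylindrical test functions with rational time points, apply the hypothesis to the strictly predictable processes $g\,I_{]a,b]}$ and differentiate in the endpoint to obtain the a.e.-in-$r$ identity with a single exceptional $\lambda$-null set, and then use the density of these test functions to identify the two measures on $\mathcal{F}_r$. The only cosmetic difference is that you spell out the final monotone-class passage from $\bigcup_{s<r}\mathcal{F}_s$ to $\mathcal{F}_r$, which the paper leaves implicit in its phrase ``by continuity, we can consider only the case with each $t_i$ rational.''
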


\begin{proof}
For every $n\ge1$, there exists a countable family of bounded continous function $\pa{f^n_k}_{k\ge1}$, defined on $\erre^n$, such that every $f \in C_c\pa{\erre^n}$ is the pointwise limit of an appropriate sequence with elements from that family. Therefore, for $\lambda$ a.e.\ $s \in [0,T]$, it will suffice to prove that, for all $n\ge1$, for every $\Delta_J W$, with $J = \pa{t_0, \ldots, t_n}$ with $t_n \le s$ and every $k \ge 1$,
\[ \int f^n_k\pa{\Delta_J W}  \sigma\pa{s} d\nu = \E\qua{ f^n_k\pa{\Delta_J W} H_s } \punto\]
Moreover, by continuity, we can consider only the case with each $t_i$ rational. Therefore, the thesis is equivalent to prove that for every $n$, every $\Delta_J W$, with each $t_i$ rational, and every $k \ge 1$, for $\lambda$ a.e.\ $s\ge t_n$, the identity above holds true. The hypothesis implies that for every $t \ge t_n$,
\[ \int_{t_n}^t \int f^n_k\pa{\Delta_J W}  \sigma\pa{s} d\nu ds= \int_{t_n}^t\E\qua{ f^n_k\pa{\Delta_J W} H_s } ds \punto \]
since it is the integral of a stricly predictable process. Therefore the two integrands must coincide for $\lambda$ a.e\ $s\ge t_n$.
\end{proof}

\subsection{Statement and proof of the main result}

For the proof of the extended Ocone-Karatzas formula we will make use of the following result, which contains It\^o's representation theorem and a version of Ocone-Karatzas formula for smooth functions, which can be shown directly by an application of It\^o's formula.

\begin{lemma}
For $f \in L^1\pa{\Prob}$, there exists a unique process $F \in \mathcal{M}^2_{loc}$ such that
\begin{equation}\label{equation-ito-representation} f = \E\qua{f} + \int_0^T F_s dW_s \punto\end{equation}
If $f$ is smooth, then $F_s = \E\qua{\partial_s f | \mathcal{F}_s}$, for $\lambda$ a.e. $s \in [0,T]$.
\end{lemma}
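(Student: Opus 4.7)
The plan is to prove the two statements separately: existence and uniqueness of the It\^o representation for $f \in L^1\pa{\Prob}$, and the explicit identification of the integrand when $f$ is smooth.

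For the first part, I would set $M_t = \E\qua{f | \cF_t}$; since we are on the Brownian filtration, $M$ admits a continuous modification and is a martingale with $M_0 = \E\qua{f}$ and $M_T = f$ a.s. The classical martingale representation theorem in $L^2$ — proved, say, via density of the Wiener chaos or of exponential martingales — yields, for $f \in L^2\pa{\Prob}$, a unique predictable $F \in L^2\pa{\Omega\times[0,T]}$ with $M_t = M_0 + \int_0^t F_s dW_s$. To extend to $L^1$, I would localize via $\tau_n = \inf\gra{t\,:\, \abs{M_t} \ge n} \wedge T$: each stopped martingale $M^{\tau_n}$ is bounded, so one obtains $F^{\pa{n}}$, and consistency on the intersections $\gra{\tau_n \le \tau_{n+1}}$ glues these into a single predictable $F$ satisfying $\int_0^T F_s^2 ds < \infty$ a.s., i.e.\ $F \in \mathcal{M}^2_{loc}$. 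Uniqueness follows by applying the It\^o isometry to the difference of two candidate integrands on each stochastic interval $\qua{0,\tau_n}$ and letting $n\to\infty$.

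For the smooth case, take $f = \phi\pa{\Delta_J W}$ with $J = \pa{t_0,\ldots,t_n}$. On each interval $t \in [t_{k-1}, t_k]$, conditioning on $\cF_t$ integrates out the independent future increments $W_{t_k}-W_t$ and $W_{t_j}-W_{t_{j-1}}$ for $j>k$, yielding
\[ M_t = u_k\pa{t;\, \Delta W^{\pa{1}}, \ldots, \Delta W^{\pa{k-1}}, W_t - W_{t_{k-1}} }, \]
where $u_k\pa{t; y_1, \ldots, y_{k-1}, z}$ is obtained by Gaussian convolution of $\phi$ in its last $n-k+1$ arguments, and in particular is smooth in $z$ on $[t_{k-1}, t_k]$. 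Applying It\^o's formula piecewise in $k$, the fact that $M$ is a martingale forces the drift to vanish — equivalently, $u_k$ satisfies the backward heat equation $\partial_t u_k + \tfrac12 \partial_{zz} u_k = 0$, which one verifies directly by differentiating the Gaussian kernel under the integral. Thus $dM_t = \partial_z u_k\, dW_t$ on each subinterval, and summing over $k$ gives the representation with $F_t = \partial_z u_k$ evaluated along the Brownian path. Finally, commuting $\partial_z$ with the expectation (a shift of the Gaussian variable $W_{t_k} - W_t$) identifies $\partial_z u_k$ with $\E\qua{\partial_k \phi\pa{\Delta_J W}\,|\,\cF_t} = \E\qua{\partial_t f \,|\, \cF_t}$ for $t \in \clo{t_{k-1}, t_k}$.

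The main obstacle is the bookkeeping across the breakpoints $t_k$: one must check that the piecewise It\^o expansions splice into a single stochastic integral on $[0,T]$ and that the identification $\partial_z u_k = \E\qua{\partial_k \phi\pa{\Delta_J W}\,|\,\cF_t}$ holds for $\lambda$ a.e.\ $t$ uniformly across $k$. The $L^1$ localization argument is standard, and the identification on a single interval reduces to the commutation of differentiation in $z$ with convolution against the Gaussian density.
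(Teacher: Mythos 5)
Your proof is correct and follows exactly the route the paper indicates: the paper states this lemma without proof, remarking only that it contains It\^o's representation theorem together with a smooth-case Ocone--Karatzas formula ``which can be shown directly by an application of It\^o's formula.'' Your localization argument for the $L^1\pa{\Prob}$ case and the piecewise It\^o expansion for $\E\qua{\phi\pa{\Delta_J W}\,|\,\mathcal{F}_t}$ (with the backward heat equation killing the drift and Gaussian differentiation under the integral identifying $\partial_z u_k$ with $\E\qua{\partial_t f\,|\,\mathcal{F}_t}$) are the standard way of filling in precisely those details.
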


\begin{remark}
The space $\mathcal{M}^2_{loc}$ consists of all the equivalence classes of predictable processes $F$, such that
\[ \int_0^T F_s^2 ds < \infty \virgola  \quad \textrm{ $\Prob$ almost surely} \punto\]
For such an $F$, there exists a sequence of predictable stopping times $\pa{\tau_n}_{n\ge1}$, increasing towards $T$, such that, for every $n$, the predictable process  $F_n = F I_{[0,\tau_n[}$ is square integrable (and so it belongs to $\mathcal{M}^2$).
\end{remark}

\begin{theorem}\label{theorem-clark-ocone-bv} Let $f$ be a $BV$ function. The measure $Df$, restricted to the strictly predictable $\sigma$-algebra $\mathcal{P}$, is absolutely continuous with respect to $\Prob \otimes \lambda$. If $H$ is a version of the density, then
\[f = \E\qua{f} + \int_0^T H_s dW_s \punto\]
\end{theorem}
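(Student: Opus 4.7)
The plan is to reduce the claim to the smooth case via Theorem \ref{theorem-ambrosio-fukushima} and then pass to the limit, using the $BV$ integration-by-parts formula together with the duality between $L\log^{1/2}L$ and centred-Gaussian multipliers. First, pick smooth (hence bounded) $f_n$ with $f_n \to f$ in $L^1\pa{\Prob}$ and $\sup_n \E\qua{\abs{\nabla f_n}_2} < \infty$; by the embedding $\cD^{1,1}\hookrightarrow L\log^{1/2}L$ of Remark \ref{remark-embedding-d1-llog}, this sequence is bounded in the Orlicz norm. A first technical point is to strengthen this to genuine convergence of $f_n$ to $f$ in $L\log^{1/2}L$, which is the role of Lemma \ref{lemma-martingale-llog}: I would interleave the Ambrosio--Fukushima smoothing with conditional expectations $\E\qua{\cdot\,|\,\mathcal{G}_m}$ along a discrete filtration $\mathcal{G}_m\uparrow \mathcal{A}$ and extract a diagonal subsequence.

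For each smooth $f_n$, the classical Clark--Ocone formula (the lemma preceding the theorem) yields
\[
 f_n = \E\qua{f_n} + \int_0^T H_n^s\,dW_s, \qquad H_n^s = \E\qua{\partial_s f_n\,|\,\mathcal{F}_s}\virgola
\]
and a short Itô-isometry computation, or equivalently Proposition \ref{proposition-predictable-projection} applied to $Df_n = \nabla f_n . \Prob$, identifies $H_n$ as the strictly predictable density of $\tilde{Df_n}|_{\mathcal{P}}$ with respect to $\Prob \otimes \lambda$. Concretely, for a test process $G = g \, I_{]s,t]}$ with $g$ smooth and $\mathcal{F}_s$-measurable, so that $\partial_h g = 0$ when $h' = I_{]s,t]}$, the $BV$ definition applied to $f_n$ reads
\[
 \int G\,d\tilde{Df_n} = -\E\qua{f_n\,\partial_h^* g} = \E\qua{f_n\,g\,\pa{W_t - W_s}} = \E\qua{g \int_s^t H_n^r\,dr}\punto
\]

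I would then pass $n\to\infty$. The functional $g\pa{W_t - W_s}$ is bounded times a centred Gaussian and therefore lies in the dual of $L\log^{1/2}L$ by \eqref{equation-holder-orlicz}; together with the $L\log^{1/2}L$-convergence of $f_n$, this gives $\E\qua{f_n g\pa{W_t - W_s}}\to\E\qua{f g\pa{W_t - W_s}}$, and the $BV$ definition applied to $f$ itself rewrites the limit as $\int G\,d\tilde{Df}$. On the other hand, $\pa{H_n}$ is uniformly bounded in $L^1\pa{\Prob\otimes\lambda}$ by $T^{1/2}\sup_n\E\qua{\abs{\nabla f_n}_2}$, and a uniform integrability argument (drawing again on the Orlicz estimates and Lemma \ref{lemma-martingale-llog}) yields a weak $L^1$-limit $H$, which is strictly predictable up to a $\pa{\Prob\otimes\lambda}$-null set. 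Density of smooth strictly predictable processes in $L^1\pa{\mathcal{P},\,\Prob\otimes\lambda + \abs{\tilde{Df}}_{|\mathcal{P}}}$ extends the identity $\int G\,d\tilde{Df} = \E\qua{\int_0^T G_r H_r\,dr}$ to all bounded $\mathcal{P}$-measurable $G$, which is exactly the statement that $\tilde{Df}|_{\mathcal{P}}\ll \Prob\otimes\lambda$ with strictly predictable density $H$. Finally, $\int_0^T H_n^s\,dW_s = f_n - \E\qua{f_n}\to f - \E\qua{f}$ in $L^1$ by construction, and uniqueness in the classical Itô representation of $f\in L^1\pa{\Prob}$ identifies the limiting integrand with $H$, yielding the announced formula $f = \E\qua{f} + \int_0^T H_s\,dW_s$.

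The hard part of this program is the upgrade of the Ambrosio--Fukushima approximation from $L^1$- to $L\log^{1/2}L$-convergence, together with the parallel equi-integrability of the densities $H_n$; both ingredients are exactly what Lemma \ref{lemma-martingale-llog} is designed to supply, which is the reason it was set up in advance. Once these are in place, the remaining steps are a routine density-and-uniqueness identification of $\tilde{Df}|_{\mathcal{P}}$ against the dense class of smooth strictly predictable test processes introduced at the beginning of Section \ref{section-4}.
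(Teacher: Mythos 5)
Your reduction to smooth approximants $f_n$ and the identity
\[
\int g I_{]s,t]}\, d(Df_n) \;=\; \E\qua{f_n\, g\pa{W_t-W_s}} \;=\; \E\qua{g\int_s^t H^n_r\,dr}
\]
is exactly the computation the paper runs, and the passage to the limit on the left-hand side via the Orlicz pairing \eqref{equation-holder-orlicz} is sound (in fact you do not even need norm convergence of $f_n$ in $L\log^{1/2}L$ there: $L^1$-convergence plus the uniform $L\log^{1/2}L$-bound coming from the embedding of $\cD^{1,1}$ already gives $\E\qua{(f_n-f)Y}\to 0$ for sub-Gaussian $Y$, by truncating $Y$ and applying Young's inequality on the tail, so the delicate ``interleaving'' you sketch can be avoided). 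The genuine gap is on the right-hand side: you assert that the $L^1\pa{\Prob\otimes\lambda}$-bounded sequence $\pa{H^n}$ admits a weak $L^1$ limit $H$ thanks to ``a uniform integrability argument'' drawing on Lemma \ref{lemma-martingale-llog} and the Orlicz estimates. No such argument is available from those ingredients. $L^1$-boundedness yields only weak-$*$ compactness in the space of measures, and the weak-$*$ limit of the measures $H^n.\pa{\Prob\otimes\lambda}$ could a priori carry a singular part --- which is precisely the possibility the theorem is meant to exclude. Equi-integrability of $\pa{H^n}$ is essentially equivalent to the absolute continuity of $Df_{|\mathcal{P}}$, so as written the main assertion is assumed rather than derived. (The final identification step suffers from the same problem: $L^1$-convergence of $\int_0^T H^n_s\,dW_s$ does not by itself allow you to pass a weak limit of the integrands inside the stochastic integral.)

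The paper escapes this circle by never constructing $H$ as a limit of the $H^n$. It sets $H:=F$, the integrand in the It\^o representation $f=\E\qua{f}+\int_0^T F_s\,dW_s$, which exists a priori in $\mathcal{M}^2_{loc}$ for every $f\in L^1\pa{\Prob}$, and proves separately (the lemma following the theorem) the bound $\E\qua{\int_0^T\abs{F_s}\,ds}\le T^{1/2}\abs{Df}\pa{\Omega}$ via Doob's maximal inequality, the Burkholder--Gundy good-$\lambda$-inequality applied to $\int_0^T\abs{F^n_s-F_s}^2\,ds$, and Fatou's lemma. Once $F.\pa{\Prob\otimes\lambda}$ is known to be a finite measure, it only remains to check that it integrates smooth strictly predictable processes exactly as $Df$ does; here Lemma \ref{lemma-martingale-llog} is applied not to the Ambrosio--Fukushima approximants but to the stopped martingale $\E\qua{f\,|\,\mathcal{F}^\Prob_{\tau_n}}$, and the density of smooth processes in $L^1$ of $\mathcal{P}$ concludes. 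To repair your scheme you must either prove equi-integrability of $\pa{H^n}$ directly or, as the paper does, produce the candidate density independently of the approximation and establish its $L^1\pa{\Prob\otimes\lambda}$-integrability.
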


\begin{remark}\label{remark-H-density}
By Proposition \ref{proposition-predictable-projection}, we can write, for $\lambda$ a.e.\ $s\in[0,T]$:
\[ H_s  = \frac{ d\pa{D_s f}_{|\mathcal{F}_s} }{ dP_{|\mathcal{F}_s} } \punto \]
\end{remark}

We provide here a shorter proof for case $f \in BV \cap L^2\pa{\Prob}$.

\begin{proof} There is no loss in generality if we suppose that $\E\qua{f} = 0$. Since $f$ is square integrable, the process $F$ in \eqref{equation-ito-representation} belongs to $\mathcal{M}^2$ and therefore $F.\Prob \otimes \lambda$ defines a real measure on the product space. It is sufficient to show that the integrals of smooth processes, taken with respect to $Df$ and $F.\Prob\otimes \lambda$, coincide.

Given $0\le s<t \le T$ and a smooth function $g$, measurable with respect to $\mathcal{F}_s$, the Wiener-It\^o isometry implies that
\[\E\qua{f\int_s^t g  dW_r }= \E\qua{ \int_0^T  F_r dW_r \int_s^t g  dW_r } = \E\qua{ \int_s^t  g  F_r dr }  \punto \]
Since $g$ is $\mathcal{F}_s$-measurable, $\int_s^t g dW_r  = -\partial^*_h g$, where $h \in H^1_0$ is such that $h' = I_{]s,t]}$. By the definition of $Df$, the left member above is equal to
\[ \int g I_{]s,t]} \, d Df\punto \]
By linearity, we conclude.
\end{proof}

The next lemma shows that we gain integrability for $F$ in \eqref{equation-ito-representation}, when $f$ is $BV$.

\begin{lemma}
If $f$ is $BV$ and $F$ is the process in \eqref{equation-ito-representation}, then
\[ \int_0^T \E\qua{\abs{F_s} } ds \le  T^{1/2}\abs{Df} \pa{\Omega} \punto\]
\end{lemma}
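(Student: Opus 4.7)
The plan is to identify $F$ with the predictable density $H$ supplied by Theorem \ref{theorem-clark-ocone-bv}, and then convert the $L^1\pa{\Prob \otimes \lambda}$ norm of $H$ into a total variation estimate that is controlled by Lemma \ref{lemma-measure-l2-measure-product}.

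First, Theorem \ref{theorem-clark-ocone-bv} furnishes a strictly predictable version $H$ of the density of $Df_{|\mathcal{P}}$ with respect to $\Prob \otimes \lambda$ satisfying $f = \E\qua{f} + \int_0^T H_s dW_s$. Since the stochastic integral is well defined, $H$ belongs to $\mathcal{M}^2_{loc}$, so the uniqueness part of the preceding It\^o representation lemma forces $F = H$ as elements of $\mathcal{M}^2_{loc}$. By Tonelli and the standard identity that the total variation of a signed measure equals the $L^1$ norm of its Radon--Nikodym density,
\[ \int_0^T \E\qua{\abs{F_s}} ds = \int_{\Omega \times [0,T]} \abs{H}\, d\pa{\Prob \otimes \lambda} = \abs{Df_{|\mathcal{P}}}\pa{\Omega \times [0,T]}\punto \]

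Next, I would invoke the elementary fact that restriction of a signed measure to a sub-$\sigma$-algebra cannot increase total variation: every $\mathcal{P}$-measurable partition is also an $\mathcal{A} \otimes \mathcal{B}\pa{[0,T]}$-measurable partition, so the supremum defining $\abs{Df_{|\mathcal{P}}}$ is taken over a smaller family than that defining the total variation of $Df$ as a real measure on the full product space. Hence $\abs{Df_{|\mathcal{P}}}\pa{\Omega \times [0,T]} \le \abs{Df}\pa{\Omega \times [0,T]}$, where $\abs{Df}$ on the right refers to the total variation of the real product-space measure associated to $Df$.

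Finally, the polar decomposition $Df = \sigma \cdot \abs{Df}$ on $\Omega$, with $\abs{\sigma}_2 \le 1$, produces the associated real measure $\sigma^{\abs{Df}} \cdot \pa{\abs{Df} \otimes \lambda}$ on the product space. Applying Lemma \ref{lemma-measure-l2-measure-product} with $\nu = \abs{Df}$ to $\sigma$ gives
\[ \int \abs{\sigma^{\abs{Df}}}\, d\pa{\abs{Df} \otimes \lambda} \le T^{1/2} \int_\Omega \abs{\sigma}_2 \, d\abs{Df} \le T^{1/2}\abs{Df}\pa{\Omega}\virgola \]
and chaining the three estimates yields the claim. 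The only mildly delicate point is the identification $F = H$; beyond that, the argument is a routine bookkeeping exercise combining Tonelli, sub-$\sigma$-algebra restriction, and Lemma \ref{lemma-measure-l2-measure-product}.
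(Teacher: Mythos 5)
Your argument has a genuine circularity problem. You identify $F$ with the density $H$ supplied by Theorem \ref{theorem-clark-ocone-bv}, but this lemma is precisely one of the ingredients needed to \emph{prove} Theorem \ref{theorem-clark-ocone-bv} for a general $BV$ function: at the point where the lemma appears, only the case $f \in BV \cap L^2\pa{\Prob}$ of the theorem has been established, and the proof of the general case begins by invoking this very lemma to guarantee that $F.\pa{\Prob\otimes\lambda}$ defines a finite real measure on the product space (and later to justify dominated convergence for $gF$). Since a $BV$ function is only required to lie in $L\log^{1/2}L$, which does not embed into $L^2\pa{\Prob}$, you cannot assume the theorem when proving the lemma. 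For $f \in BV\cap L^2\pa{\Prob}$ your chain of estimates --- $F=H$ by uniqueness in It\^o's representation, $\int_0^T\E\qua{\abs{F_s}}ds = \abs{Df_{|\mathcal{P}}}\pa{\Omega\times[0,T]}$, monotonicity of total variation under restriction to the sub-$\sigma$-algebra $\mathcal{P}$, and the $T^{1/2}$ bound from Lemma \ref{lemma-measure-l2-measure-product} --- is correct and gives a clean alternative derivation in that restricted setting, but it does not reach the general case, which is the whole point of the statement.

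The paper's proof avoids the theorem entirely. It first checks the bound for smooth $f$, where $F_s = \E\qua{\partial_s f\,|\,\mathcal{F}_s}$, so that Fubini and Cauchy's inequality give $\E\qua{\int_0^T\abs{F_s}\,ds}\le T^{1/2}\E\qua{\abs{\nabla f}_2}$. It then takes smooth $f_n \to f$ in $L^1\pa{\Prob}$ with $\E\qua{\abs{\nabla f_n}_2}\to\abs{Df}\pa{\Omega}$ via Theorem \ref{theorem-ambrosio-fukushima}, and passes to the limit using Doob's maximal inequality together with the Burkholder--Gundy good-$\lambda$ inequality to obtain $\int_0^T\abs{F^n_s-F_s}^2\,ds\to0$ in probability, concluding by Fatou's lemma. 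Some approximation argument of this kind, which uses only $L^1$ convergence of the $f_n$ and produces the integrability of $F$ as its \emph{output}, appears unavoidable: to salvage your route you would first have to establish the absolute continuity of $Df_{|\mathcal{P}}$ with density $F$ without already knowing that $F$ is integrable, which is essentially what the paper's limiting argument accomplishes.
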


\begin{remark}
If $f \in L\log L$, then, by the continuous embedding of this Orlicz space into $H^1$ (see e.g.\ \cite{revuz1999continuous}, Exercise 1.16, p.\ 58) and by the equivalence of the $H^1$ norm with $\E\qua{\abs{f} + \abs{F}_2}$, we conclude that $F$ is integrable. However, it is not clear if such a conclusion holds true for $f \in L\log^{1/2} L$: we proceed therefore generalizing some ideas from \cite{karaoco91}.
\end{remark}

\begin{proof}
If $f$ is smooth, then $F = \E\qua{\partial_sf | \mathcal{F}_s}$, so that $\abs{F_s} \le \E\qua{\abs{\partial_sf} | \mathcal{F}_s}$. By Fubini's theorem and Cauchy's inequality,
\[  \E\Big[\int_0^T \abs{F_s} ds\Big] \le \E\Big[\int_0^T \abs{\partial_sf}  ds \Big] \le T^{1/2} \E\qua{\abs{\nabla f}_2} \punto\]

Given a $BV$ function $f$, we consider a sequence of smooth functions $\pa{f_n}_{n\ge1}$, convergent to $f$ in $L^1\pa{\Prob}$, such that $\pa{\E\qua{\abs{\nabla f_n}_2}}_{n\ge1}$ converges to $\abs{Df} \pa{\Omega}$. Such a sequence exists by an application of Theorem \ref{theorem-ambrosio-fukushima} and a diagonal argument.

For every $n\ge1$, we write $F_n$ for the process that represents $f_n$, and $M^n$ for the $\mathcal{F}^\Prob$-martingale
\[ M^n_t = \E\qua{f_n | \mathcal{F}^\Prob_t} = \E\qua{f_n} + \int_0^t F^n_s dW_s \punto\]
We also write $M_t = \E\qua{f | \mathcal{F}^\Prob_t}$, and $\Delta M^*_n = \sup_{0\le t \le T} \abs{M^n_t - M_t}$. By Doob's maximal inequality, we have
\[ \Prob\pa{ \Delta M^*_n > \epsilon } \le \frac 1 \epsilon \E\qua{ \abs{M^n_T - M_T} }\virgola \]
for every $\epsilon > 0$. By the Burkholder-Gundy good-$\lambda$-inequality (see \cite{burkholder-1973}, and \cite{karaoco91}),
\[ \Prob\pa{ \int_0^T \abs{F^n_s - F_s}^2 ds > 4 \lambda^2, \Delta M^*_n \le \delta \lambda}\le \delta^2 \Prob\pa{  \int_0^T \abs{F^n_s - F_s}^2 ds > \lambda^2 }\virgola\]
for every $\lambda >0$ and $\delta \in \pa{0,1}$. This leads immediately to the inequality
\[ \Prob\pa {\int_0^T \abs{F^n_s - F_s}^2 ds > 4\lambda^2 } \le \delta^2 +\frac{1}{\delta \lambda} \E\qua{ \abs{M^n_T - M_T} } \virgola\]
which implies that
\[ \int_0^T \abs{F^n_s - F_s}^2ds \to 0 \]
in probability, and without loss of generality, $\Prob$ almost surely. By Cauchy's inequality, $\int\abs{F^n_s}ds$ converges to $\int\abs{F_s}ds$, $\Prob$ almost surely so, by Fatou's lemma, we conclude that
\[\E\Big[\int_0^T \abs{F_s} ds \Big]  \le T^{1/2} \liminf_n \E\qua{\abs{\nabla f_n}_2} =  T^{1/2} \abs{Df}\pa{\Omega}  \punto \]
\end{proof}

\begin{proof} (General case.) We can argue as in the $L^2\pa{\Prob}$ case, since by the lemma above, $F.\pa{\Prob \otimes \lambda}$ defines a real measure on the product space. It is therefore sufficient to show that the integrals of smooth processes, taken with respect to $Df$ and $F.\Prob \otimes \lambda$, coincide. We take $0< s < t \le T$ and a smooth $g$, measurable with respect to $\mathcal{F}_s$.

Let $\pa{\tau_n}_{n\ge1}$ be a sequence of predictable stopping times increasing to $T$, such that $F^n = I_{[0,\tau_n[} F \in \mathcal{M}^2$. Then,
\[ f_n = \E\qua{f | \mathcal{F}^\Prob_{\tau_n} } = \int_0^T F^n_r \,dW_r \virgola\]
so that $\pa{f_n}_{n\ge1}$ an $\pa{\mathcal{F}^\Prob_{\tau_n}}_{n\ge1}$-martingale, closed by $f$, in $L\log^{1/2}L$. By Lemma \ref{lemma-martingale-llog}, $\pa{f_n}_{n\ge1}$ converges to $f$ in $L\log^{1/2}L$.

For every $n\ge1$, by the Wiener-It\^o isometry,
\begin{equation}\label{equation-identity-ito-clark} \E\qua{ f_n \,g \pa{W_t - W_s} }  = \E\qua{ \int _0^T g I_{]s,t]}(r)\, F^n_r\,dr }\punto\end{equation}

By inequality \eqref{equation-holder-orlicz}, as $n\to \infty$, the left member converges to
\[ \E\qua{f g \pa{W_t - W_s} } = \int g I_{]s,t]} \, d Df \punto\]
For the member on the right, we use Lebesgue's dominated convergence theorem, since $g F \in L^1\pa{\Omega\times [0,T],\Prob\otimes \lambda}$. Therefore, identity \eqref{equation-identity-ito-clark} holds true with $f$ in place of $f_n$, and we conclude as we did for the  $L^2\pa{\Prob}$ case.\end{proof}

\section{Applications}\label{section-5}

We conclude by showing how the extended Ocone-Karatzas formula, together with the chain rule, allows us to find explicit representations. We consider first the case of $f= \phi\pa{W\pa{k}}$ and then the case of $f = \phi\pa{M}$ where $M = \sup_{s\le T} W_s$, with $\phi$ as introduced in Section \ref{section-3}.

\subsection{The case of cylindrical random variables}

Given $k \in L^2\pa{0,T}$, with $\abs{k}_2 = 1$, we choose a representative such that $k\pa{s} = 0$ if $\abs{I_{]s,T]}k}_2 = 0$, for every $s \in [0,T]$, and write
\[K\pa{ x, s } = k\pa{s} \frac{\exp\gra{-x^2/(2 \abs{I_{]s,T]}k}^2_2)} }{\sqrt{2 \pi} \abs{I_{]s,T]}k}_2 } \virgola\]
where we define $K\pa{x,s} = 0$ if $k\pa{s} = 0$.

\begin{proposition}\label{prop-app-1}
Let $f = \phi \pa{ W\pa{k} }$, with the notation as above. Then
\[ f  = \E\qua{f} + \int_0^T \qua{ \int_\erre K\pa{ x-W\pa{I_{]0,s]}k}, s }  D\phi\pa{dx}} dW_s \punto \]
\end{proposition}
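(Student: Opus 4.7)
The plan is to combine the chain rule Theorem~\ref{theorem-chain-rule-4-disintegrated-useful} with the extended Ocone-Karatzas formula Theorem~\ref{theorem-clark-ocone-bv}, and to verify that the candidate process
\[
\tilde H_s \;:=\; \int_{\erre} K\pa{x - W\pa{I_{]0,s]} k},\, s}\, D\phi(dx)
\]
is (a version of) the strictly predictable density of $Df|_{\mathcal{P}}$ with respect to $\Prob\otimes\lambda$. Taking $g = W\pa{k}$ one has $\nabla g \equiv k$, so $g$ carries the standard Gaussian density $\rho$ and $\xi(x) = \rho(x)\abs{k}_2 = \rho(x)$ is bounded; hence Theorem~\ref{theorem-chain-rule-2-phi} applies, $f = \phi\circ g$ is $BV$, and Theorem~\ref{theorem-clark-ocone-bv} supplies the representation $f = \E\qua{f} + \int_0^T H_s\, dW_s$ for some strictly predictable density $H$ which we must identify with $\tilde H$.

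To apply the chain rule I first produce the continuous-in-$x$ representative required by Theorem~\ref{theorem-chain-rule-4-disintegrated-useful}. Fix a smooth $\mathcal{F}_s$-measurable $u$ and decompose $g = Y + Z$ with $Y := W\pa{I_{]0,s]} k}$, $\mathcal{F}_s$-measurable, and $Z := W\pa{I_{]s,T]} k} \sim \mathcal{N}\pa{0,\sigma_s^2}$ independent of $\mathcal{F}_s$, where $\sigma_s := \abs{I_{]s,T]} k}_2$ and $\phi_\sigma$ denotes the centred Gaussian density of variance $\sigma^2$. A change of variables yields, for every $\psi \in C_c\pa{\erre}$,
\[
\E\qua{u\, \psi(g)} = \int_{\erre} \psi(x)\, \E\qua{u\, \phi_{\sigma_s}(x - Y)}\, dx,
\]
so that $\rho(x)\, \E\qua{u \mid g = x} = \E\qua{u\, \phi_{\sigma_s}(x - Y)}$ for $\lambda$-a.e.\ $x$, the right-hand side being continuous in $x$ by dominated convergence. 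Setting $U(x, r) := k(r)\, \E\qua{u\, \phi_{\sigma_s}(x - Y)}$, the hypotheses of Theorem~\ref{theorem-chain-rule-4-disintegrated-useful} are immediate and yield, for $\lambda$-a.e.\ $r \ge s$,
\[
\int u\, dD_r f \;=\; k(r) \int_{\erre} \E\qua{u\, \phi_{\sigma_s}(x - Y)}\, D\phi(dx).
\]

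The second part is to match this with $\E\qua{u\, \tilde H_r}$. Writing $W\pa{I_{]0,r]} k} = Y + W\pa{I_{]s,r]} k}$ with $W\pa{I_{]s,r]} k}$ independent of $\mathcal{F}_s$, conditioning on $\mathcal{F}_s$ reduces the computation to the convolution of the centred Gaussian densities of variances $\abs{I_{]s,r]} k}_2^2$ and $\sigma_r^2 = \abs{I_{]r,T]} k}_2^2$, whose sum equals $\sigma_s^2$; this collapses the conditional integrand to $\phi_{\sigma_s}(x-Y)$, giving $\E\qua{u\, \tilde H_r} = k(r)\int_\erre \E\qua{u\, \phi_{\sigma_s}(x-Y)}\, D\phi(dx) = \int u\, dD_r f$. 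Integrating this identity against $I_{]s,t]}$ and invoking the density of smooth processes in $L^1\pa{\mathcal{P}, \Prob\otimes\lambda}$ recalled in Section~\ref{section-4} shows that $\tilde H\cdot\pa{\Prob\otimes\lambda}$ and $Df$ agree on $\mathcal{P}$; since $\tilde H$ is adapted and jointly measurable it admits a strictly predictable version, hence coincides with $H$ up to $\Prob\otimes\lambda$-null sets, which yields the claimed representation. The main technical step is the Gaussian convolution above: it is what reconciles the $s$-dependent formula delivered by the chain rule with an $r$-dependent, genuinely predictable candidate $\tilde H_r$.
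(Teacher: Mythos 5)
Your argument is correct and follows essentially the same route as the paper: you verify the hypotheses of Theorem~\ref{theorem-chain-rule-4-disintegrated-useful} by conditioning on $\mathcal{F}_s$ and a Gaussian change of variables, and then identify the resulting process with the strictly predictable density supplied by Theorem~\ref{theorem-clark-ocone-bv}. The only organizational difference is that the paper establishes the identity $\rho\pa{x}\,\E\qua{u\,k\pa{r} \mid W\pa{k}=x}=\E\qua{u\,K\pa{x-W\pa{I_{]0,r]}k},r}}$ directly at every $r\ge s$ (reusing that an $\mathcal{F}_s$-measurable $u$ is also $\mathcal{F}_r$-measurable), which makes your extra Gaussian-convolution step reconciling the frozen-at-$s$ version with the $r$-dependent integrand unnecessary; the two computations carry exactly the same content.
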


\begin{proof}
It is sufficient to prove that, for all $s \in [0,T]$ and every smooth function $u$, measurable with respect to $\mathcal{F}_s$, then
\[ \frac{\exp\gra{-x^2/2}}{\sqrt{2\pi} } \E\qua{\, u k\pa{s} \, | W\pa{k} = x } = \E\qua{\, u\, K\pa{ x-W\pa{I_{]0,s]}k}, s } \, }\punto \]
Indeed, we define $U\pa{ r, x }$ as the right member above if $r\ge s$ and $U\pa{ r, x } = 0$ for $r<s$. Since the right member above is a continuous function of $x$ and the inequality
\[  \E\qua{\, u k\pa{s} \, | W\pa{k} = x } \le \norm{u} \abs{k\pa{s}} \]
holds true for all $s \in [0,T]$, we apply Theorem \ref{theorem-chain-rule-4-disintegrated-useful}, Fubini's theorem and we find that
\[ \E\qua{\, u\, \int_\erre K\pa{ x-W\pa{I_{]0,s]}k}, s } D\phi\pa{dx}\, }  = \int u D_s f \punto \]
for $\lambda$ a.e. $s \in [0,T]$ and every smooth function $u$ as above. By Remark \ref{remark-H-density}, we conclude.

Therefore, given a continuous function $\psi$, with compact support, we have to show that
\[ \E\qua{ \psi\pa{W\pa{k}} u k\pa{s}} = \int_\erre \psi \pa{x} \E\qua{\, u \,  K\pa{ x-W\pa{I_{]0,s]}k}, s }\, } dx\punto \]
We take the conditional expectation with respect to $\mathcal{F}_s$,
\[\E\qua{ \psi \pa{W\pa{k}} u k\pa{s}} = 
\E\qua{ u \int_\erre \psi\pa{ W\pa{kI_{]0,s]}} + y} K\pa{ y, s } dy } \punto\]
We conclude thanks to the change of variables $x = W\pa{kI_{]0,s]}} + y$ and exchanging integration and expectation.
\end{proof}

\begin{example}
We consider $f = I_{\gra{W_t \ge 0}}$, for some $ 0< t \le T$. Since $DI_{\gra{\sqrt{t} x \ge0}} = \sqrt{t} \delta_0$, we find the representation
\begin{equation} \label{equation-clark-brownian-bridge} I_{\gra{W_t \ge 0}} = \frac 1 2 + \int_0^t \frac{\exp\gra{-W_s^2/2\pa{t-s}} }{\sqrt{2 \pi \pa{t-s}} } dW_s \punto \end{equation}
\end{example}

\begin{remark}
Formula \eqref{equation-clark-brownian-bridge} above is well known (e.g.\ \cite{üstünel1995introduction}, p.\ 68) and there exists even an alternative approach (see \cite{springerlink:10.1023/A:1011259820029}), which produces similar formulas, valid for bounded cylindrical random variables.

Indeed, the following observation shows that  the two approaches are, in this case, equivalent. For every $s$, $K\pa{\cdot,s}\in C^1_b\pa{\erre}$, so we can integrate-by-parts on $\erre$ and obtain the representation
\[ f  = \E\qua{f} + \int_0^T \qua{ \int_\erre -\phi\pa{x} \partial_x K\pa{ x-W\pa{I_{]0,s]}k}, s }  dx } dW_s \virgola \]
which is a special case of Theorem 3.4 in \cite{springerlink:10.1023/A:1011259820029}.

We remark that, in the next application, such an integration-by-parts will fail in general, since the correspondent $K$ is less regular.
\end{remark}

\subsection{The case of the maximum of the Wiener process}

Given $0\le s< t \le T$, we write $M_{[s,t]} = \sup_{s\le r \le t} \pa{W_r - W_s}$, so that,
\begin{equation}\label{equation-maximum} M = M_{[0,T]} = M_{[0,s]} \lor \pa{W_s + M_{[s,T]}} \end{equation}
and $M_{[s,T]}$ is independent of $\mathcal{F}_s$, with a.c.\ law (with respect to $\lambda$), of density
\[ m_{T-s} \pa{x} = 2  \frac{\exp\gra{-x^2/2\pa{T-s}} }{\sqrt{2 \pi \pa{T-s}} } I_{\gra{x>0}} \punto\]

\begin{proposition}\label{prop-app-2}
Let $f = \phi \pa{M}$, with the notation as above. Then
\begin{equation} f  = \E\qua{f} +  \int_0^T \qua{\int_\erre m_{T-s}\pa{x-W_s} I_{\gra{x > M_{[0,s]}}} D\phi\pa{dx}}dW_s \punto \end{equation}
\end{proposition}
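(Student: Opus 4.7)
The plan is to follow the pattern of Proposition \ref{prop-app-1}: identify the strictly predictable density $H_r$ in Theorem \ref{theorem-clark-ocone-bv} via the disintegrated chain rule, Theorem \ref{theorem-chain-rule-4-disintegrated-useful}, together with Remark \ref{remark-H-density}. As a preliminary, I would invoke the classical facts that $M \in \cD^{1,1}$ with $\nabla M = I_{[0,\tau]} \in L^2\pa{0,T}$, where $\tau$ denotes the $\Prob$-a.s.\ unique argmax of $W$ on $[0,T]$, and that the law of $M$ has density $\rho_M = m_T$. Since $\abs{\nabla M}_2 = \sqrt{\tau} \le \sqrt{T}$, the function $\xi$ of Section \ref{section-3} is globally bounded, so Theorem \ref{theorem-chain-rule-2-phi} ensures that $f = \phi\pa{M}$ is $BV$.

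Next I fix $s \in [0,T)$ and a smooth $\mathcal{F}_s$-measurable $u$, and introduce the candidate
\[ U\pa{x,r} = \E\qua{ u \, m_{T-r}\pa{x - W_r} I_{\gra{x > M_{[0,r]}}} }\virgola \qquad r \in [s, T) \punto \]
The hypotheses of Theorem \ref{theorem-chain-rule-4-disintegrated-useful} are straightforward: $U\pa{\cdot, r}$ is continuous because $m_{T-r}$ is continuous and, for every $x$, $\Prob\pa{M_{[0,r]} = x} = 0$, so the indicator is $\Prob$-a.s.\ continuous in $x$; writing $\gamma_a$ for the Gaussian density of variance $a$, the inequality $m_{T-r} \le 2 \gamma_{T-r}$ together with Gaussian convolution gives $\E\qua{m_{T-r}\pa{x - W_r}} \le 2 \gamma_T\pa{x}$, whence $\abs{U\pa{x,r}} \le 2 \norm{u}_\infty \gamma_T\pa{x}$ uniformly in $r$, providing both $r \mapsto U\pa{x, r} \in L^2\pa{0,T}$ and the global boundedness of $x \mapsto \abs{U\pa{x, \cdot}}_2$.

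The essential point is that $U\pa{\cdot, r}$ is a version of $\rho_M\pa{x} \E\qua{u \, \partial_r M \mid M = x}$ for every $r \ge s$. Testing against a continuous compactly supported $\psi$, Fubini and the change of variable $y = x - W_r$ give
\[ \int_\erre \psi\pa{x} U\pa{x,r} dx = \E\qua{ u \int_0^\infty \psi\pa{y + W_r} m_{T-r}\pa{y} I_{\gra{y + W_r > M_{[0,r]}}} dy } \punto \]
Since $m_{T-r}$ is the density of $M_{[r,T]}$, independent of $\mathcal{F}_r$, the inner integral equals $\E\qua{\psi\pa{W_r + M_{[r,T]}} I_{\gra{W_r + M_{[r,T]} > M_{[0,r]}}} \mid \mathcal{F}_r}$. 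By the decomposition \eqref{equation-maximum}, the event $\gra{W_r + M_{[r,T]} > M_{[0,r]}}$ coincides modulo a $\Prob$-null set with $\gra{\tau \ge r}$, and on this event $M = W_r + M_{[r,T]}$; hence the right-hand side reduces to $\E\qua{u \, \psi\pa{M} I_{\gra{\tau \ge r}}} = \E\qua{u \, \partial_r M \, \psi\pa{M}}$, as desired.

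With the hypotheses satisfied, Theorem \ref{theorem-chain-rule-4-disintegrated-useful} combined with Fubini (legitimate via the pointwise bound on $U$ and the compact support of $D\phi$) yields, for $\lambda$-a.e.\ $r \ge s$,
\[ \int u \, D_r f = \int_\erre U\pa{x,r} D\phi\pa{dx} = \E\qua{ u \int_\erre m_{T-r}\pa{x - W_r} I_{\gra{x > M_{[0,r]}}} D\phi\pa{dx} } \punto \]
Denoting the $\mathcal{F}_r$-measurable random variable inside the outer expectation by $H'_r$, Remark \ref{remark-H-density} gives $\E\qua{u H_r} = \E\qua{u H'_r}$. Varying $s$ over rationals in $[0,r)$ and $u$ over a countable dense family of smooth $\mathcal{F}_s$-measurable functions, dense in $L^2\pa{\mathcal{F}_r}$ thanks to $\mathcal{F}_r = \bigvee_{q \in \mathbb{Q} \cap [0,r)} \mathcal{F}_q$, identifies $H_r = H'_r$ for $\lambda$-a.e.\ $r$, $\Prob$-a.s., and substitution in Theorem \ref{theorem-clark-ocone-bv} gives the stated representation. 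The main obstacle is the probabilistic computation in the third paragraph, where the passage from the Fubini expression to $\partial_r M = I_{\gra{\tau \ge r}}$ rests critically on the Markov decomposition \eqref{equation-maximum}.
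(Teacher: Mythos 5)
Your proposal is correct and follows essentially the same route as the paper: identify the candidate $U\pa{x,r} = \E\qua{u\, m_{T-r}\pa{x-W_r} I_{\gra{x > M_{[0,r]}}}}$, verify it is a version of $m_T\pa{x}\E\qua{u\,\partial_r M \mid M = x}$ by testing against $\psi \in C_c\pa{\erre}$, conditioning on $\mathcal{F}_r$ and using the decomposition \eqref{equation-maximum} together with $\partial_r M = I_{\gra{M_{[0,r]} < W_r + M_{[r,T]}}}$, and then conclude via Theorem \ref{theorem-chain-rule-4-disintegrated-useful} and Remark \ref{remark-H-density}. You supply somewhat more detail than the paper (the preliminary $BV$ check, the Gaussian bounds, the final density argument identifying $H_r$), but the key computation is identical.
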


\begin{remark}\label{remark-derivative-maximum}
It is known (see e.g.\ \cite{nualart1995malliavin}, Proposition 2.1.10, p.\ 109) that $M$ admits a Malliavin derivative, $\nabla M = I_{[0,\sigma[}$, where 
\[\sigma = \inf\gra{ t: W_{t} \ge W_s, \,\, \forall \, 0 \le s \le T } \punto\]
We note that $\partial_s M$ is the indicator function of $\gra{  M_{[0,s]} < W_s + M_{[s,T]}}$.
\end{remark}

\begin{proof}
It is sufficient to prove that, for all $s \in ]0,T[$ and every smooth function $u$, measurable with respect to $\mathcal{F}_s$, then
\[ m_T\pa{x} \E\qua{\, u \partial_s M \, | M = x } = \E\qua{\, u\, m_{T-s}\pa{x-W_s} I_{\gra{x > M_{[0,s]}}} \, }\punto \]
Indeed, we define $U\pa{ r, x }$ as the right member above if $r\ge s$ and $U\pa{ r, x } = 0$ for $r<s$. Since the right member above is a continuous function of $x$ and the inequality
\[  \E\qua{\, u \partial_s M \ \, | M = x } \le \norm{u}  \]
holds true for all $s \in ]0,T[$, we conclude as we did for cylindrical functions.	

Therefore, given a continuous function $\psi$, with compact support, we have to show that
\[ \E\qua{ \psi\pa{M} u \partial_s M } = \int_\erre \psi \pa{x} \E\qua{\, u \, m_{T-s}\pa{x-W_s} I_{\gra{x > M_{[0,s]}}} } dx \punto \]
We take the conditional expectation with respect to $\mathcal{F}_s$ and use the expression for $M$ in \eqref{equation-maximum} $\partial_s M$ in Remark \ref{remark-derivative-maximum},
\[\E\qua{ \psi \pa{M} u \partial_s M} = \E\qua{ u \int_\erre \psi\pa{ W_s + y } I_{ \gra{W_s + y  > M_{[0,s]}} } m_{T-s}\pa{y} dy } \punto\]
With a change of variables $x = W_s + y$ and exchanging integration and expectation, we conclude.
\end{proof}

\begin{example}
We consider the function $I_{\gra{M \ge y}}$, for some $y>0$. Since $DI_{\gra{ x \ge y}} = \delta_y$, we find the representation
\[I_{\gra{M \ge y}} = \Prob\pa{M \ge y} + \int_0^{\tau_y} m_{T-s}\pa{y-W_s} dW_s \virgola \]
where $\tau_y = \inf\gra{ 0\le s\le T : W_s = y }$ is the time of the first visit at $y$.
\end{example}

\section*{Acknowledgements}

The authors thank L.\ Ambrosio for many helpful discussions and suggestions.

\bibliography{bvclark}

\begin{thebibliography}{10}
\expandafter\ifx\csname url\endcsname\relax
  \def\url#1{\texttt{#1}}\fi
\expandafter\ifx\csname urlprefix\endcsname\relax\def\urlprefix{URL }\fi
\expandafter\ifx\csname href\endcsname\relax
  \def\href#1#2{#2} \def\path#1{#1}\fi

\bibitem{fuku00}
M.~Fukushima, {$BV$} functions and distorted {Ornstein-Uhlenbeck} processes
  over the abstract {Wiener} space, Journal of Functional Analysis 174 (2000)
  227--249.

\bibitem{hino01}
M.~F.~M. Hino, On the space of {$BV$} functions and a related stochastic
  calculus in infinite dimensions, Journal of Functional Analysis 183 (2001)
  245--268.

\bibitem{ambrosio1}
L.~Ambrosio, M.~M. Jr., S.~Maniglia, D.~Pallara, {$BV$} functions in abstract
  {Wiener} spaces, Journal of Functional Analysis 258 (2010) 785--813.

\bibitem{karaoco91}
I.~Karatzas, D.~L. Ocone, J.~Li, An extension of {Clark}'s formula, Stochastics
  and Stochastics Reports 37 (1991) 127--131.

\bibitem{nualart1995malliavin}
D.~Nualart, The {Malliavin} calculus and related topics, Probability and its
  applications, Springer-Verlag, 1995.

\bibitem{springerlink:10.1007/BFb0095676}
M.~Ledoux, Isoperimetry and gaussian analysis, in: Lectures on Probability
  Theory and Statistics, Vol. 1648 of Lecture Notes in Mathematics, Springer
  Berlin / Heidelberg, 1996, pp. 165--294, 10.1007/BFb0095676.

\bibitem{ambfus00}
L.~Ambrosio, N.~Fusco, D.~Pallara, Functions of bounded variation and free
  discontinuity problems, Oxford mathematical monographs, Clarendon Press,
  2000.

\bibitem{revuz1999continuous}
D.~Revuz, M.~Yor, Continuous martingales and {Brownian} motion, Grundlehren der
  mathematischen Wissenschaften, Springer, 1999.

\bibitem{burkholder-1973}
D.~L. Burkholder, Distribution function inequalities for martingales, The
  Annals of Probability 1~(1) (1973) pp. 19--42.

\bibitem{üstünel1995introduction}
A.~{\"U}st{\"u}nel, An introduction to analysis on {Wiener} space, Lecture
  notes in mathematics, Springer, 1995.

\bibitem{springerlink:10.1023/A:1011259820029}
K.~Aase, B.~Øksendal, J.~UbØe, Using the donsker delta function to compute
  hedging strategies, Potential Analysis 14 (2001) 351--374.

\end{thebibliography}

\end{document}